%%%%%%%%%%%%%%%%%%%%%%%%%%%%%%%%%%%%%%FINAL REVISION JUNE 2011/CORR NOVEMBER 2011

\documentclass[12pt]{amsart}

%%%%%%%%%%%%%%%%%%%%%%%%%%%%%%%%%%%%%%MACROS

\def\bR {\mathbf{R}}

\def\cD {\mathcal{D}}
\def\cE {\mathcal{E}}

\def\cP {\mathcal{P}}

\def\cW {\mathcal{W}}

\def\de {{\delta}}
\def\eps {{\epsilon}}
\def\th {{\theta}}

\def\d {{\partial}}
\def\grad {{\nabla}}
\def\Dlt {{\Delta}}

\def\rstr {{\big |}}
\def\indc {{\bf 1}}

\def\Lip{{\mathrm{Lip}}}

%%%%%%%%%%%%%%%%%%%%%%%%%%%%%%%%%%%%COMMANDES

\newcommand{\Div}{\operatorname{div}}
\newcommand{\Rot}{\operatorname{curl}}

\newcommand{\Supp}{\operatorname{supp}}

\newcommand{\Dist}{\operatorname{dist}}

\newcommand{\ba}{\begin{aligned}}
\newcommand{\ea}{\end{aligned}}

\newcommand{\be}{\begin{equation}}
\newcommand{\ee}{\end{equation}}

\newcommand{\lb}{\label}

%%%%%%%%%%%%%%%%%%%%%%%%%%%%%%%%%%%%THEOREMES

\newtheorem{Thm}{Theorem}[section]

\newtheorem{Prop}[Thm]{Proposition}
\newtheorem{Cor}[Thm]{Corollary}

\newtheorem{Def}[Thm]{Definition}

%%%%%%%%%%%%%%%%%%%%%%%%%%%%%%%%%%%%%%%%%%%%%%%%%%%%%%%%%%%%%%%%%%%%%%%%

\begin{document}

\title[Mean-Field Limit for Vlasov-Maxwell]{The Mean-Field Limit for a Regularized Vlasov-Maxwell Dynamics}

\author[F. Golse]{Fran\c cois Golse}

\address[F. G.]%
{Ecole polytechnique\\
Centre de math\'ematiques L. Schwartz\\
F91128 Palaiseau cedex\\
\& Universit\'e Paris-Diderot\\
Laboratoire J.-L. Lions, BP 187\\
F75252 Paris cedex 05} 

\email{francois.golse@math.polytechnique.fr}

\keywords{Mean-field limit; Vlasov-Maxwell system; Li\'enard-Wiechert potential, Monge-Kantorovich-Rubinstein distance} 

\subjclass{82C22, 35Q83, 35Q61 (82D10)}

\begin{abstract}
The present work establishes the mean-field limit of a $N$-particle system towards a regularized variant of the relativistic Vlasov-Maxwell system,
following the work of Braun-Hepp [Comm. in Math. Phys. \textbf{56} (1977), 101--113] and Dobrushin [Func. Anal. Appl. \textbf{13} (1979), 115--123]
for the Vlasov-Poisson system. The main ingredients in the analysis of this system are (a) a kinetic formulation of the Maxwell equations in terms of 
a distribution of electromagnetic potential in the momentum variable, (b) a regularization procedure for which an analogue of the total energy --- i.e.
the kinetic energy of the particles plus the energy of the electromagnetic field --- is conserved and (c) an analogue of Dobrushin's stability estimate
for the Monge-Kantorovich-Rubinstein distance between two solutions of the regularized Vlasov-Poisson dynamics adapted to retarded potentials.
\end{abstract}

\maketitle

%%%%%%%%%%%%%%%%%%%%%%%%%%%%%%%%%%%%%%%%%%%%%%%%%%%%%%%%%%%%%%%%%%%%%%%%
\section{Introduction}
%%%%%%%%%%%%%%%%%%%%%%%%%%%%%%%%%%%%%%%%%%%%%%%%%%%%%%%%%%%%%%%%%%%%%%%%

Vlasov equations are kinetic equations used to describe the evolution of dilute, collisionless systems of particles coupled by some long-range
interaction. For instance, in the case of a rarefied plasma, each charged particle is accelerated by the electrostatic field created by all the other 
particles, a situation described by the Vlasov-Poisson system \cite{Vlasov38}. A similar Vlasov-Poisson system is used in astrophysics to model 
the collective behavior of like massive objects coupled by Newton's gravitational force --- which is attractive, at variance with the electrostatic
force between charged particles with charges of the same sign.

Other types of interactions can also be considered. In the case of magnetized plasmas --- e.g. plasmas in tokamak machines --- each charged
particle is accelerated by the Lorentz force resulting from the electromagnetic field created by all the other particles, a situation described by
the Vlasov-Maxwell system \cite{LuVla50}. There are also general relativistic variants of the gravitational Vlasov-Poisson system, such as the 
Vlasov-Einstein \cite{ChoquetBruhat71} or the (model) Vlasov-Nordstr\"om systems \cite{CaloRein03}.

At the time of this writing, there is no rigorous derivation of any of these kinetic equations from the corresponding $N$-body problems, in the
large $N$ limit. Braun and Hepp \cite{BraunHepp} and Dobrushin \cite{Dobrushin} have proposed instead rigorous derivations of a system
analogous to the Vlasov-Poisson system, replacing the Coulomb potential with a twice differentiable mollification thereof. Hauray and Jabin
\cite{HaurayJabin07} have succeeded in treating the case of singular potentials, but their analysis does not include the Coulomb singularity 
yet. It should be noted that the situation is significantly better in the quantum case: the Hartree or Schr\"odinger-Poisson equation,  that is the 
quantum analogue of the Vlasov-Poisson system has been derived from the quantum $N$-body problem with Coulomb potential by Erd\"os 
and Yau \cite{ErdosYau}.

In the case of the Vlasov-Maxwell system, it is not even clear what the corresponding $N$-body problem should be. For instance, there 
is at present no satisfying description of the electromagnetic self-interaction (i.e. the action of the electromagnetic field created by a moving, 
point-like charge on itself) within the theory of classical electrodynamics. Bibliographic references discussing this well-known issue can be
found in the remarks following the introduction of the regularized $N$-body system (\ref{DynSystVMReg}).

The present paper proposes an analogue of the Braun-Hepp or Dobrushin theory for (a mollified version of) the Vlasov-Maxwell system. 
Elskens, Kiessling and Ricci \cite{ElsKiesRic09} have recently considered a special relativistic variant of the gravitational Vlasov-Poisson 
system where the Poisson equation for the potential is replaced with a linear wave equation. 

The problem of deriving a regularized variant of the Vlasov-Maxwell equations from a particle system is explicitly mentioned by Kiessling on 
p. 111 in his survey article \cite{Kiessling08}. As expected, this problem can be handled more or less along the line of \cite{ElsKiesRic09}, with 
however a few significant modifications, which it is the purpose of the present work to explain. As we shall see, the regularization procedure
removes all the conceptual difficulties pertaining to the electromagnetic self-interaction mentioned above. The idea of considering a mollified
variant of the Vlasov-Maxwell dynamics is therefore motivated by reasons other than analytical simplicity.

%%%%%%%%%%%%%%%%%%%%%%%%%%%%%%%%%%%%%%%%%%%%%%%%%%%%%%%%%%%%%%%%%%%%%%%%
\section{A scalar formulation of the Vlasov-Maxwell system}
%%%%%%%%%%%%%%%%%%%%%%%%%%%%%%%%%%%%%%%%%%%%%%%%%%%%%%%%%%%%%%%%%%%%%%%%

Regularizing the Coulomb potential is a crucial step in the mean-field limit established by Braun and Hepp \cite{BraunHepp} and Dobrushin 
\cite{Dobrushin}. At first sight, it is not clear that the same procedure can be applied to the Vlasov-Maxwell system, as the electromagnetic
field involves both a scalar and vector potentials. Perhaps for that reason, Elskens, Kiessling and Ricci \cite{ElsKiesRic09} chose ``to purge
the inhomogeneous wave equation for [the vector potential] $A$ and all terms involving (derivatives of) $A$ in the Lorentz force'' and retain
only the inhomogeneous wave equation for the scalar potential.

In this section, we explain how both the scalar and vector potentials in the Vlasov-Maxwell system can be expressed in terms of a single scalar
potential distributed in the momentum variable. This formulation, introduced in \cite{BGPArch,BGPIbero} for other purposes --- viz. for studying
regularity properties of the Vlasov-Maxwell system --- although perhaps not absolutely necessary in the present context, greatly simplifies our 
arguments in the sequel.

The Vlasov-Maxwell system for a single species of particles with momentum distribution function $f(t,x,\xi)$ --- meaning that $f(t,x,\xi)$ is the 
density  of particles with momentum $\xi\in\bR^3$ that are located at the position $x\in\bR^3$ at time $t\ge 0$ --- is written as follows:
\be\lb{VM}
\left\{
\begin{array}{l}
\d_tf+v(\xi)\cdot\grad_xf+(E+v(\xi)\wedge B)\cdot\grad_\xi f=0\,,
\\
\Div_xB=0\,,
\\
\d_tB+\Rot_xE=0\,,
\\
\Div_xE=\rho_f\,,
\\
\d_tE-\Rot_xB=-j_f\,.
\end{array}
\right.
\ee
We have used the notation
$$
v(\xi):=\grad_\xi e(\xi)=\frac{\xi}{\sqrt{1+|\xi|^2}}\,,\quad\hbox{ where }e(\xi):=\sqrt{1+|\xi|^2}
$$
and
$$
\rho_f(t,x):=\int_{\bR^3}f(t,x,\xi)d\xi\,,\quad\hbox{ while }j_f(t,x):=\int_{\bR^3}v(\xi)f(t,x,\xi)d\xi\,.
$$

This system is posed in the whole Euclidian space --- i.e. for all $x,\xi\in\bR^3$ --- and supplemented with the initial data
\be\lb{CondIn}
f\rstr_{t=0}=f^{in}\,,\quad E\rstr_{t=0}=E^{in}\,,\quad B\rstr_{t=0}=0\,,
\ee
where $E^{in}$ is the electrostatic field created by the distribution of charges $f^{in}$:
\be\lb{CompatIn}
E^{in}=-\grad_x\phi^{in}\,,\quad\phi^{in}=(-\Dlt_x)^{-1}\int_{\bR^3}f^{in}d\xi\,.
\ee
(We restrict our attention to such initial data for simplicity; treating the case of general initial data 
$$
f\rstr_{t=0}=f^{in}\,,\quad E\rstr_{t=0}=E^{in}\,,\quad B\rstr_{t=0}=B^{in}\,,
$$
with the compatibility conditions
$$
\Div_x E^{in}=\int_{\bR^3}f^{in}d\xi\,,\quad\Div_x B^{in}=0
$$
is not significantly more complicated, as it only involves one extra linear, homogeneous wave equation.) 

Solve for $\phi_0$ the Cauchy problem for the linear wave equation
\be\lb{DefPhi0}
\left\{
\begin{array}{l}
\Box_{t,x}\phi_0=0\,,
\\	\\
\phi_0\rstr_{t=0}=\phi^{in}=(-\Dlt_x)^{-1}\displaystyle\int_{\bR^3}f^{in}d\xi\,,
\\	\\
\d_t\phi_0\rstr_{t=0}=0\,.
\end{array}
\right.
\ee

The scalar formulation of (\ref{VM}) is as follows. Consider the coupled system with unknowns $f\equiv f(t,x,\xi)\ge 0$ and $u\equiv u(t,x,\xi)\in\bR$:
\be\lb{VMScal}
\left\{
\begin{array}{l}
\d_tf+v(\xi)\cdot\grad_xf+(E+v(\xi)\wedge B)\cdot\grad_\xi f=0\,,
\\	\\
\Box_{t,x} u=f\,,
\\	\\
\phi=\phi_0+\displaystyle\int_{\bR^3}ud\xi\,,\quad A=\displaystyle\int_{\bR^3}v(\xi)ud\xi\,,
\\	\\
B=\Rot_xA\,,\quad E=-\d_tA-\grad_x\phi\,,
\end{array}
\right.
\ee
with initial data
\be\lb{CondInVMScal}
f\rstr_{t=0}=f^{in}\,,\quad u\rstr_{t=0}=\d_tu\rstr_{t=0}=0\,.
\ee
Physically, $u(t,x,\xi)$ is the instantaneous Li\'enard-Wiechert potential created by a particle with momentum $\xi$ at the position $x$, distributed 
under $f(t,x,\xi)$.

\begin{Prop}\lb{P-ScalReduc}
Let $f\in C_c^\infty([0,T]\times\bR^3\times\bR^3)$, and let $\phi_0$ be the solution of (\ref{DefPhi0}). If $(f,u)$ satisfies (\ref{VMScal}) with initial 
data (\ref{CondInVMScal}), then the electromagnetic potential $(\phi,A)$ defined as in (\ref{VMScal}) is smooth and satisfies the Lorentz gauge
$$
\d_t\phi+\Div_xA=0\,,
$$
while $(f,E,B)$, with the electromagnetic field $(E,B)$ as in (\ref{VMScal}), satisfies (\ref{VM}) with initial condition (\ref{CondIn})-(\ref{CompatIn}).
\end{Prop}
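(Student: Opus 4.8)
The plan is to read the four Maxwell equations off the potential representation, the only genuine work being (i) the wave equations satisfied by $\phi$ and $A$ and (ii) the Lorentz gauge. First I would record that $u$ is smooth and compactly supported in $\xi$: since $f\in C_c^\infty$, for each $\xi$ outside the $\xi$-support of $f$ the function $u(\cdot,\cdot,\xi)$ solves $\Box_{t,x}u=0$ with vanishing Cauchy data by (\ref{CondInVMScal}), hence vanishes, while for $\xi$ in a compact set standard wave-equation theory gives smoothness; therefore $\phi=\phi_0+\int u\,d\xi$ and $A=\int v(\xi)u\,d\xi$ are smooth ($\phi_0$ being smooth). Integrating $\Box_{t,x}u=f$ in $\xi$ against $1$ and against $v(\xi)$, and using $\Box_{t,x}\phi_0=0$, yields
$$
\Box_{t,x}\phi=\rho_f\,,\qquad\Box_{t,x}A=j_f\,,
$$
the differentiations commuting with the $\xi$-integral because $u$ has compact $\xi$-support.

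Next, from $B=\Rot_xA$ and $E=-\d_tA-\grad_x\phi$ the two homogeneous equations $\Div_xB=0$ and $\d_tB+\Rot_xE=0$ hold identically (divergence of a curl, curl of a gradient). For the inhomogeneous pair I would compute $\Div_xE=-\d_t\Div_xA-\Dlt_x\phi$ and, using $\Rot_x\Rot_xA=\grad_x\Div_xA-\Dlt_xA$, $\d_tE-\Rot_xB=-\Box_{t,x}A-\grad_x(\d_t\phi+\Div_xA)$. Combined with the wave equations just obtained, these collapse to $\Div_xE=\rho_f$ and $\d_tE-\Rot_xB=-j_f$ exactly when the Lorentz gauge $\d_t\phi+\Div_xA=0$ holds. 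So everything reduces to proving the gauge.

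To that end set $G:=\d_t\phi+\Div_xA$. Then $\Box_{t,x}G=\d_t\Box_{t,x}\phi+\Div_x\Box_{t,x}A=\d_t\rho_f+\Div_xj_f$, which vanishes by the continuity equation obtained from integrating the Vlasov equation in $\xi$: the transport term gives $\Div_xj_f$, and the force term integrates to zero since $E+v(\xi)\wedge B$ is divergence-free in $\xi$ — because $E$ is $\xi$-independent while $\grad_\xi v(\xi)=\grad_\xi^2 e(\xi)$ is symmetric and contracts to zero against the antisymmetric wedge with $B$ — together with the compact $\xi$-support of $f$. Hence $G$ solves the homogeneous wave equation, and by uniqueness it suffices to verify $G\rstr_{t=0}=\d_tG\rstr_{t=0}=0$. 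The vanishing of the Cauchy data is the step that uses the structure of the initial conditions, and is where I expect the only real subtlety. From $u\rstr_{t=0}=\d_tu\rstr_{t=0}=0$ and $\d_t\phi_0\rstr_{t=0}=0$ one gets $G\rstr_{t=0}=0$ and $\d_tA\rstr_{t=0}=0$ at once; for $\d_tG\rstr_{t=0}=\d_t^2\phi\rstr_{t=0}+\Div_x\d_tA\rstr_{t=0}$ the second term drops, and the first I would handle through the wave equation in the form $\d_t^2\phi=\Dlt_x\phi+\rho_f$, which at $t=0$ combined with $\phi\rstr_{t=0}=\phi^{in}$ and the compatibility relation $\Dlt_x\phi^{in}=-\rho_{f^{in}}$ from (\ref{CompatIn}) gives $\d_t^2\phi\rstr_{t=0}=-\rho_{f^{in}}+\rho_f\rstr_{t=0}=0$.

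This pins down $G\equiv0$, establishing the gauge and hence all four equations of (\ref{VM}). Finally I would recover the initial data (\ref{CondIn})--(\ref{CompatIn}): $B\rstr_{t=0}=\Rot_xA\rstr_{t=0}=0$ since $u\rstr_{t=0}=0$, and $E\rstr_{t=0}=-\d_tA\rstr_{t=0}-\grad_x\phi\rstr_{t=0}=-\grad_x\phi^{in}=E^{in}$. The main obstacle is thus not conceptual but the careful identification of $\d_tG\rstr_{t=0}$, since that is the one place where the compatibility of the initial field $E^{in}$ with $f^{in}$ is indispensable; the remaining computations are routine once the two potential wave equations and the $\xi$-divergence-free structure of the Lorentz force are in hand.
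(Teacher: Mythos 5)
Your proposal is correct and follows essentially the same route as the paper: the homogeneous Maxwell equations are immediate from the potential representation, the Lorentz gauge quantity is shown to solve the homogeneous wave equation via the continuity equation (using the $\xi$-divergence-free Lorentz force) with vanishing Cauchy data (using $u\rstr_{t=0}=\d_tu\rstr_{t=0}=0$, $\d_t\phi_0\rstr_{t=0}=0$ and the compatibility condition (\ref{CompatIn})), and the inhomogeneous equations then follow from the wave equations $\Box_{t,x}\phi=\rho_f$, $\Box_{t,x}A=j_f$. The only differences are organizational (you isolate these two wave equations up front, where the paper computes $\Box_{t,x}g$ directly) and notational (your $G$ for the gauge quantity collides with the paper's $G$ for the fundamental solution of $-\Dlt_x$).
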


\smallskip
We shall use systematically the following elements of notation: $C_c(X)$ (resp. $C^k_c(X)$, $C^\infty_c(X)$) designates the set of continuous
(resp. $C^k$, $C^\infty$) functions with compact support included in $X$. Likewise, $C_b(X)$ designates the set of bounded continuous functions 
defined on $X$, while $C^k_b(X)$ designates the set of $C^k$ functions defined on $X$ whose derivatives of order $\le k$ are bouned on $X$. 
Finally, $C^\infty_b(X)$ designates the class of $C^\infty$ defined on $X$ whose derivatives of all orders are bounded on $X$.

\begin{proof}
Since $f^{in}\in C_c^\infty(\bR^3\times\bR^3) $, the initial electrostatic potential 
$$
\phi^{in}=(-\Dlt_x)^{-1}\rho_{f^{in}}\in C_b^\infty(\bR^3)\,,
$$
so that the solution $\phi_0$ of the Cauchy problem (\ref{DefPhi0}) is in $C_b^\infty([0,T]\times\bR^3)$ for each $T>0$. 

Since $f\in C_c^\infty([0,T]\times\bR^3\times\bR^3)$, the finite speed of propagation and the regularity theory for solutions of the wave equation
implies that $u\in C_c^\infty([0,T]\times\bR^3\times\bR^3)$. 

Therefore the scalar potential $\phi\in C_b^\infty([0,T]\times\bR^3)$ and the vector potential $A\in C_c^\infty([0,T]\times\bR^3;\bR^3)$, so that, in 
view of the last equalities in (\ref{VMScal}), the electric field $E\in C^\infty_b([0,T]\times\bR^3;\bR^3)$ while the magnetic field 
$B\in C_c^\infty([0,T]\times\bR^3;\bR^3)$ and
$$
\left\{
\ba
{}&\Div_xB=0\,,
\\
&\d_tB+\Rot_xE=0\,.
\ea
\right.
$$

Besides, $\rho_f$ and $j_f$ belong to $C_c^\infty([0,T]\times\bR^3)$ and $C_c^\infty([0,T]\times\bR^3;\bR^3)$ respectively, and since
$v(\xi)=\grad_\xi e(\xi)$, one has $\Div_\xi(v(\xi)\wedge B(t,x))=B(t,x)\cdot\Rot_\xi v(\xi)=0$ so that
$$
\ba
\d_t\rho_f+\Div_xj_f&=\int_{\bR^3}(\d_tf+v(\xi)\cdot\grad_xf)(t,x,\xi)d\xi
\\
&=\int_{\bR^3}\Div_\xi(-(E+v\wedge B)f)(t,x,\xi)d\xi=0\,.
\ea
$$
Let us verify that $(\phi,A)$ satisfies the Lorentz gauge condition. Setting $g:=\d_t\phi+\Div_xA\in C_b^\infty([0,T]\times\bR^3)$, one has
$$
\ba
\Box_{t,x} g&=\Box_{t,x}\d_t\phi_0+\Box_{t,x}\int_{\bR^3}(\d_tu+\Div_x(v(\xi)u))d\xi
\\
&=\d_t\rho_f+\Div_x j_f=0\,,
\ea
$$
together with the initial conditions
$$
g\rstr_{t=0}=\d_t\phi_0\rstr_{t=0}+\int_{\bR^3}(\d_tu\rstr_{t=0}+\Div_x(v(\xi)u\rstr_{t=0}))d\xi=0\,,
$$
and
$$
\ba
\d_tg\rstr_{t=0}&=\d_t^2\phi\rstr_{t=0}+\Div_x\d_tA\rstr_{t=0}
\\
&=\Box_{t,x}\phi_0\rstr_{t=0}\!+\!\Delta_x\phi\rstr_{t=0}\!+\!\int_{\bR^3}\d_t^2u\rstr_{t=0}d\xi\!+\!\int_{\bR^3}v(\xi)\cdot\grad_x\d_tu\rstr_{t=0}d\xi
\\
&=\Delta_x\phi^{in}+\int_{\bR^3}\Box_{t,x} u\rstr_{t=0}d\xi+\int_{\bR^3}\Dlt_xu\rstr_{t=0}d\xi
\\
&=\Delta_x\phi^{in}+\int_{\bR^3}f^{in}d\xi=0\,.
\ea
$$
By the uniqueness property for the wave equation, one concludes that $g=0$, which is the Lorentz gauge condition.

It remains to verify that the electromagnetic field satisfies the Gauss and Maxwell-Amp\`ere equations: by the Lorentz gauge condition,
$$
\ba
\Div_xE&=\Box_{t,x}\phi-\d_t(\d_t\phi+\Div_xA)
\\
&=\Box_{t,x}\phi_0+\Box_{t,x}\int_{\bR^3}ud\xi=\int_{\bR^3}fd\xi=\rho_f\,,
\ea
$$
while
$$
\ba
\d_tE-\Rot_xB&=-\Box_{t,x} A-\grad_x(\d_t\phi+\Div_xA)
\\
&=-\Box_{t,x}\int_{\bR^3}v(\xi)ud\xi=-\int_{\bR^3}v(\xi)fd\xi=-j_f\,.
\ea
$$
Therefore $(f,E,B)$ is a solution of the Vlasov-Maxwell system (\ref{VM}), and
$$
\ba
E\rstr_{t=0}=-\int_{\bR^3}v(\xi)\d_tu\rstr_{t=0}d\xi-\grad_x\int_{\bR^3}u\rstr_{t=0}d\xi-\grad_x\phi_0\rstr_{t=0}
\\
=-\grad_x(-\Delta_x)^{-1}\int_{\bR^3}f^{in}d\xi=E^{in}\,,
\ea
$$
while
$$
B\rstr_{t=0}=\Rot_x\int_{\bR^3}v(\xi)u\rstr_{t=0}d\xi=0\,,
$$
so that $(E,B)$ also satisfies the initial condition (\ref{CondIn}).
\end{proof}

With this formulation of the Vlasov-Maxwell system, the analogue of the Coulomb potential in the Vlasov-Poisson system becomes fairly obvious.
Let $Y$ be the forward fundamental solution of the d'Alembert operator, i.e. the only $Y\in\cD'(\bR\times\bR^3)$ satisfying
\be\lb{DefY}
\left\{
\begin{array}{l}
\Box_{t,x} Y=\de_{(t,x)=(0,0)}\,,
\\
\Supp(Y)\subset\bR_+\times\bR^3\,.
\end{array}
\right.
\ee
We recall that, in space dimension $3$, the distribution $Y$ is given by the formula
\be\lb{FlaY}
Y=\frac{\indc_{t>0}}{4\pi t}\de(|x|-t)\,,
\ee
where the notation $\de(|x|-t)$ is understood as the surface measure on the sphere of radius $t>0$ centered at the origin. Then, the function $u$ 
in (\ref{VMScal}) is given in terms of $f$ by the formula
\be\lb{f->u}
u(\cdot,\cdot,\xi)=Y\star_{t,x}(\indc_{t>0}f(\cdot,\cdot,\xi))\,,\quad\xi\in\bR^3
\ee
where $\star_{t,x}$ denotes convolution in the variables $(t,x)$, while the scalar potential $\phi_0$ is given in terms of $f^{in}$ by
\be\lb{f->phi0}
\phi_0(t,\cdot)=\d_tY(t,\cdot)\star_{x}G\star_x\int_{\bR^3}f^{in}(\cdot,\xi)d\xi\,,\quad t>0\,,
\ee
where
\be\lb{DefG}
G(z)=\frac{1}{4\pi|z|}\,,\quad z\in\bR^3
\ee
is the fundamental solution of $-\Dlt_x$.

Therefore, the Lorentz force field $F:=E+v\wedge B$ in the Vlasov-Maxwell system is given in terms of the particle distribution function $f$ 
by the formula
$$
\ba
F[f]=&-\int_{\bR^3}\grad_x\d_tY(t,\cdot)\star_{x}G\star_xf(0,\cdot,\eta)d\eta
\\
&-\int_{\bR^3}(\grad_x+v(\eta)\d_t)Y\star_{t,x}f(\cdot,\cdot,\eta)d\eta
\\
&-\int_{\bR^3}v(\xi)\wedge(v(\eta)\wedge\grad_xY\star_{t,x}f(\cdot,\cdot,\eta))d\eta\,,
\ea
$$
to be compared with the formula
$$
F_{\mathrm{Poisson}}[f]=-\int_{\bR^3}\grad_xG\star_xf(t,\cdot,\eta)d\eta
$$
in the case of the Vlasov-Poisson system. 

In the Vlasov-Poisson case, the $N$-particle dynamics considered by Hauray and Jabin (see for instance formula (1.1)-(1.2) in \cite{HaurayJabin07})
is the differential system
\be\lb{DynSystVP}
\ddot{x}_i(t)=-\frac1{N-1}\sum_{j=1\atop j\not=i}^N\grad_xG(x_i(t)-x_j(t))\,,\quad 1\le i\le N\,.
\ee

By analogy, in the Vlasov-Maxwell case, one should consider the system
\be\lb{DynSystVM}
\left\{
\ba
{}&\dot{x}_i(t)=v(\xi_i(t))
\\
&\dot\xi_i(t)=-\frac1{N-1}\sum_{j=1\atop j\not=i}^N\grad_x\d_tY(t,\cdot)\star_xG(x_i(0)-x_j(0))
\\
&-\frac1{N-1}\sum_{j=1\atop j\not=i}^N\int_0^t(\grad_x+v(\xi_j(s))\d_t)Y(t-s,x_i(t)-x_j(s))ds
\\
&-\frac1{N-1}\sum_{j=1\atop j\not=i}^N\int_0^tv(\xi_i(t))\wedge(v(\xi_j(s))\wedge\grad_xY(t-s,x_i(t)-x_j(s)))ds\,,
\\
&\hskip9.8cm {i=1,\ldots,N\,.}
\ea
\right.
\ee

There is an obvious difficulty in (\ref{DynSystVP}): since $\grad_xG$ is singular at the origin, the right-hand side of the equation for $\dot{x}_i$
is not defined whenever $x_i(t)=x_j(t)$ for some $j\not=i$. More seriously, in (\ref{DynSystVM}), the second and third terms in the r.h.s. of the 
equation for $\xi_i$ are not a priori well defined quantities because $Y$ is a measure and not a ($C^1$) function. Yet this difficulty disappears 
in the reference \cite{BraunHepp}, where the Green function $G$ is replaced with a regularization thereof. Likewise, we shall replace $Y$ 
with some regularized variant thereof, which removes the difficulty mentioned above.

Notice that, while (\ref{DynSystVP}) is a system of $N$ coupled ordinary differential equations, (\ref{DynSystVM}) is a system of $N$ coupled
integro-differential equations --- more specifically, delay differential equations. The same is true of the dynamical system (7)-(9) considered
in \cite{ElsKiesRic09}, and this is a consequence of the retarded potential formula for the solution of the Maxwell system, or, equivalently, of
Kirchhoff's formula (\ref{f->u}) for the solution of the wave equation.

%%%%%%%%%%%%%%%%%%%%%%%%%%%%%%%%%%%%%%%%%%%%%%%%%%%%%%%%%%%%%%%%%%%%%%%%
\section{A regularization of the Vlasov-Maxwell system}
%%%%%%%%%%%%%%%%%%%%%%%%%%%%%%%%%%%%%%%%%%%%%%%%%%%%%%%%%%%%%%%%%%%%%%%%

Any regularization of $Y$ would transform (\ref{DynSystVM}) into a well-posed system. Yet, classical solutions of the original Vlasov-Maxwell 
system satisfy some conservation laws, which one might wish to preserve by the regularization procedure.

Since $v=\grad_\xi e$, 
\be\lb{DivFreeLorentz}
\Div_{x,\xi}(v(\xi),E+v\wedge B)=\Div_\xi(v\wedge B)=B\cdot\Rot_\xi v=0
\ee
so that the characteristic field of the transport equation in (\ref{VM}) preserves the phase space Lebesgue measure $dxd\xi$. Hence, if $(f,E,B)$ 
is a classical solution of (\ref{VM}) on the time interval $[0,T]$, then, for all $p\in[1,+\infty]$, one has
\be\lb{ConsLp}
\|f(t,\cdot,\cdot)\|_{L^p(\bR^3\times\bR^3)}=\|f^{in}(\cdot,\cdot)\|_{L^p(\bR^3\times\bR^3)}
\ee
for each $t\in[0,T]$. This obviously remains true if the electromagnetic field $(E,B)$ is replaced in the transport equation governing $f$ with any 
regularization thereof in the $t,x$ variables.

Moreover, if $(f,E,B)$ is a classical solution of (\ref{VM}) on the time interval $[0,T]$ with, for instance, 
$f\in C^1_c([0,T]\times\bR^3\times\bR^3)$ while the electromagnetic field $E,B\in C^1_b([0,T]\times\bR^3;\bR^3)\cap C([0,T];L^2(\bR^3;\bR^3))$, 
then
\be\lb{ConsEnerg}
\iint_{\bR^3\times\bR^3}e(\xi)f(t,x,\xi)dxd\xi+\tfrac12\int_{\bR^3}(|E|^2+|B|^2)(t,x)dx=\hbox{Const.,}
\ee
which is the conservation of energy (the first integral being the kinetic energy of the particles while the second is the electromagnetic energy).
Conservation of energy is not expected to be preserved by all regularizations of the electromagnetic field. 

There is however one clever regularization procedure for the Vlasov-Maxwell system that preserves a quantity analogous to the energy and
which converges to the energy (\ref{ConsEnerg}) as the regularization is removed. This regularization of the Vlasov-Maxwell system, proposed
by Rein in \cite{ReinCMS}, is recalled below for the reader's convenience\footnote{Prof. Rein kindly informed the author that this regularization
procedure had been introduced earlier in this context by E. Horst in his Habilitationsschrift.}.

Let $\chi\in C^\infty_c(\bR^3)$ satisfy
$$
\chi(x)=\chi(-x)\ge 0\,,\quad\Supp(\chi)\subset B(0,1)\,,\quad\int_{\bR^3}\chi(x)dx=1\,,
$$
and define the regularizing sequence
$$
\chi_\eps(x)=\frac1{\eps^3}\chi\left(\frac{x}\eps\right)
$$
for each $\eps>0$. Consider then, for each $\eps>0$, the following regularized variant of (\ref{VMScal}) with unknown $(f_\eps,u_\eps)$:
\be\lb{VMScalReg}
\left\{
\begin{array}{l}
\d_tf_\eps+v(\xi)\cdot\grad_xf_\eps+(E_\eps+v(\xi)\wedge B_\eps)\cdot\grad_\xi f_\eps=0\,,
\\	\\
\Box_{t,x} u_\eps=\chi_\eps\star_x\chi_\eps\star_xf_\eps\,,
\\	\\
\phi_\eps=\chi_\eps\star_x\chi_\eps\star_x\phi_0+\displaystyle\int_{\bR^3}u_\eps d\xi\,,\quad A_\eps=\displaystyle\int_{\bR^3}v(\xi)u_\eps d\xi\,,
\\	\\
B_\eps=\Rot_xA_\eps\,,\quad E_\eps=-\d_tA_\eps-\grad_x\phi_\eps\,,
\end{array}
\right.
\ee
where $\phi_0$ is defined in as (\ref{DefPhi0}), and with the same initial data as for (\ref{VMScal}):
\be\lb{CondInVMScalReg}
f_\eps\rstr_{t=0}=f^{in}\,,\quad u_\eps\rstr_{t=0}=\d_tu_\eps\rstr_{t=0}=0\,.
\ee

Along with the electromagnetic field $(E_\eps,B_\eps)$ of the regularized system (\ref{VMScalReg}), consider $(\tilde E_\eps,\tilde B_\eps)$ defined
as the solution of the Maxwell system
\be\lb{Maxw}
\left\{
\begin{array}{l}
\Div_x\tilde B_\eps=0\,,
\\
\d_t\tilde B_\eps+\Rot_x\tilde E_\eps=0\,,
\\
\Div_x\tilde E_\eps=\chi_\eps\star_x\rho_{f_\eps}\,,
\\
\d_t\tilde E_\eps-\Rot_x\tilde B_\eps=-\chi_\eps\star_xj_{f_\eps}\,,
\end{array}
\right.
\ee
with initial data
\be\lb{CondinMaxw}
\tilde E_\eps\rstr_{t=0}=\chi_\eps\star_xE^{in}\,,\quad\tilde B_\eps\rstr_{t=0}=0\,.
\ee

\begin{Prop}[Rein \cite{ReinCMS}]\lb{P-ExistUniqVMeps}
Let $\eps>0$ and $f^{in}\in L^\infty(\bR^3\times\bR^3)$ be such that 
$$
f^{in}\ge 0\hbox{ a.e. on }\bR^3\times\bR^3\hbox{ and }\Supp(f^{in})\hbox{ is compact.}
$$ 
Then the regularized system (\ref{VMScalReg}) has a unique weak solution $(f_\eps,u_\eps)$ defined for all positive times and satisfying 
$u_\eps\in C_c^\infty([0,T]\times\bR^3\times\bR^3)$ for each $T>0$, while 
$$
\ba
{}&f_\eps(t,x,\xi)\ge 0\quad\hbox{ for a.e. }(t,x,\xi)\in\bR_+\times\bR^3\times\bR^3\,,
\\
&\|f_\eps(t,\cdot,\cdot)\|_{L^p(\bR^3\times\bR^3)}=\hbox{Const.}\hbox{ for all $p\in[1,+\infty]$, and}
\\
&\iint_{\bR^3\times\bR^3}e(\xi)f_\eps(t,x,\xi)dxd\xi+\tfrac12\int_{\bR^3}(|\tilde E_\eps|^2+|\tilde B_\eps|^2)(t,x)dx=\hbox{Const.}
\ea
$$
for each $t\ge 0$.
\end{Prop}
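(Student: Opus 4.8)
The plan is to construct $(f_\eps,u_\eps)$ by a fixed-point scheme on the scalar formulation (\ref{VMScalReg}), the key point being that the \emph{double} convolution $\chi_\eps\star_x\chi_\eps$ regularizes the source of the wave equation. Starting from a field $(E,B)$ that is continuous in $t$ and $C^1$ in $x$, bounded together with its $x$-derivatives on $[0,T]\times\bR^3$, I would solve the transport equation in (\ref{VMScalReg}) by the method of characteristics: letting $(X,\Xi)(s;t,x,\xi)$ be the flow of $\dot X=v(\Xi)$, $\dot\Xi=E+v(\Xi)\wedge B$, set $f(t,x,\xi)=f^{in}\big((X,\Xi)(0;t,x,\xi)\big)$. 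Since $f^{in}$ is compactly supported and $|v|\le 1$, finite speed of propagation for both the flow and the wave equation confines the supports of $f$, $\rho_f$ and $j_f$ to a ball whose radius grows at most linearly in $t$. The source $\chi_\eps\star_x\chi_\eps\star_xf$ is then $C^\infty$ in $x$ with all $x$-derivatives bounded in terms of $\|f^{in}\|_{L^\infty}$, of $\eps$ and of the support, uniformly on $[0,T]$; solving $\Box_{t,x}u=\chi_\eps\star_x\chi_\eps\star_xf$ via Kirchhoff's formula (\ref{f->u}) and reconstructing $(E,B)$ as in (\ref{VMScalReg}) returns a field with the same qualitative bounds. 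This defines the solution map.

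I would then show this map is a contraction on a short interval $[0,\tau]$ in a norm measuring the $x$-Lipschitz size of the fields, using a Gronwall estimate for the dependence of the flow on $(E,B)$ together with the Lipschitz dependence of the Kirchhoff representation on its (compactly supported, $x$-smooth) source. This gives local existence and uniqueness; the regularity of $u_\eps$ asserted in the statement then follows by bootstrapping --- smoothness in $x$ from the kernel $\chi_\eps\star_x\chi_\eps$ and smoothness in $t$ from $\partial_t^2u_\eps=\Delta_xu_\eps+\chi_\eps\star_x\chi_\eps\star_xf_\eps$, with compact support from finite speed of propagation. Because the a priori bounds are global (the support grows only linearly, and $\|f_\eps(t)\|_{L^\infty}$ is constant), the local solution extends to all $t>0$. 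Positivity and the conservation of the $L^p$ norms are immediate from the characteristic representation: since the characteristic field $(v(\xi),E_\eps+v(\xi)\wedge B_\eps)$ is divergence-free in $(x,\xi)$ exactly as in (\ref{DivFreeLorentz}), the flow preserves $dx\,d\xi$, whence $f_\eps\ge 0$ and $\|f_\eps(t)\|_{L^p}=\|f^{in}\|_{L^p}$ for all $p\in[1,+\infty]$, as in (\ref{ConsLp}).

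The heart of the matter, and the reason for introducing $(\tilde E_\eps,\tilde B_\eps)$ and the second convolution, is the energy identity; this is the step I expect to be the main obstacle. The key structural observation is that
$$
E_\eps=\chi_\eps\star_x\tilde E_\eps\,,\qquad B_\eps=\chi_\eps\star_x\tilde B_\eps\,.
$$
Indeed, repeating the computation of Proposition \ref{P-ScalReduc} on (\ref{VMScalReg}) shows that $(E_\eps,B_\eps)$ solves the Maxwell system with sources $\chi_\eps\star_x\chi_\eps\star_x\rho_{f_\eps}$, $\chi_\eps\star_x\chi_\eps\star_xj_{f_\eps}$ and initial data $\chi_\eps\star_x\chi_\eps\star_xE^{in}$, $0$; convolving (\ref{Maxw})--(\ref{CondinMaxw}) with $\chi_\eps$ shows that $(\chi_\eps\star_x\tilde E_\eps,\chi_\eps\star_x\tilde B_\eps)$ solves the very same problem, and uniqueness for the Maxwell system yields the identities.

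With this in hand I would differentiate the two pieces of the energy separately. Using the transport equation together with $v\cdot(v\wedge B_\eps)=0$ and $\grad_\xi\cdot(v\wedge B_\eps)=0$, integration by parts gives
$$
\frac{d}{dt}\iint_{\bR^3\times\bR^3}e(\xi)f_\eps\,dx\,d\xi=\int_{\bR^3}E_\eps\cdot j_{f_\eps}\,dx\,,
$$
while Poynting's theorem applied to (\ref{Maxw}), the curl terms cancelling through $\Div(\tilde E_\eps\wedge\tilde B_\eps)$, gives
$$
\frac{d}{dt}\tfrac12\int_{\bR^3}(|\tilde E_\eps|^2+|\tilde B_\eps|^2)\,dx=-\int_{\bR^3}\tilde E_\eps\cdot(\chi_\eps\star_xj_{f_\eps})\,dx\,.
$$
These two right-hand sides cancel: substituting $E_\eps=\chi_\eps\star_x\tilde E_\eps$ and using that $\chi_\eps$ is \emph{even}, Fubini gives $\int E_\eps\cdot j_{f_\eps}=\int\tilde E_\eps\cdot(\chi_\eps\star_xj_{f_\eps})$, which is exactly where the second convolution is indispensable. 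The main difficulty is therefore not any isolated estimate but ensuring that every object above has the regularity and decay needed to justify these integrations by parts, which is precisely what the compact support from finite speed of propagation and the smoothness produced by $\chi_\eps\star_x\chi_\eps$ provide.
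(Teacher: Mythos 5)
Your proposal is correct, and its core --- the conservation laws --- coincides with the paper's proof, which itself reproduces Rein's argument: the identity $E_\eps=\chi_\eps\star_x\tilde E_\eps$ deduced from uniqueness for the Maxwell system, the kinetic-energy identity $\tfrac{d}{dt}\iint_{\bR^3\times\bR^3} e(\xi)f_\eps\,dx\,d\xi=\int_{\bR^3}E_\eps\cdot j_{f_\eps}\,dx$, Poynting's theorem for (\ref{Maxw}), and the cancellation of the two source terms via the evenness of $\chi_\eps$ are exactly the paper's steps, as is the characteristics argument for positivity and conservation of the $L^p$ norms. Where you genuinely differ is on existence and uniqueness: the paper does not prove these at all --- it observes (by the computation of Proposition \ref{P-ScalReduc}) that (\ref{VMScalReg}) is equivalent to the system (\ref{VMReg}) studied by Rein, and then simply cites \cite{ReinCMS}; you instead sketch a self-contained construction (characteristics for the transport equation, Kirchhoff's formula with the mollified source, a short-time contraction, and global continuation from the mass bound and the linear growth of supports). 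Your route buys a self-contained proposition at the cost of technical work the paper deliberately outsources; both are sound, and your fixed-point plan is the standard way Rein-type existence is actually proved. One caveat applying to you and to the paper alike: when $f^{in}$ is merely $L^\infty$, the mollification smooths the \emph{fields} but not $f_\eps$ itself (which is just $f^{in}$ composed with the inverse flow), so the integration by parts in the kinetic-energy identity is not literally justified; the paper sidesteps this by assuming $f^{in}\in C^1_c$ ``for simplicity,'' and in your framework the cleaner fix is to differentiate $\iint_{\bR^3\times\bR^3} e(\Xi_\eps(t,x,\xi))f^{in}(x,\xi)\,dx\,d\xi$ along the flow, or to approximate the initial data --- a routine repair, but one worth stating.
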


\begin{proof}
The same computation as in the proof of Proposition \ref{P-ScalReduc} shows that $(f_\eps,E_\eps,B_\eps)$ satisfies
\be\lb{VMReg}
\left\{
\begin{array}{l}
\d_tf_\eps+v(\xi)\cdot\grad_xf_\eps+(E_\eps+v(\xi)\wedge B_\eps)\cdot\grad_\xi f_\eps=0\,,
\\
\Div_xB_\eps=0\,,
\\
\d_tB_\eps+\Rot_xE_\eps=0\,,
\\
\Div_xE_\eps=\chi_\eps\star_x\chi_\eps\star_x\rho_{f_\eps}\,,
\\
\d_tE_\eps-\Rot_xB_\eps=-\chi_\eps\star_x\chi_\eps\star_xj_{f_\eps}\,,
\end{array}
\right.
\ee
with initial data
\be\lb{CondInReg}
f_\eps\rstr_{t=0}=f^{in}\,,\quad E_\eps\rstr_{t=0}=\chi_\eps\star_x\chi_\eps\star_xE^{in}\,,\quad B_\eps\rstr_{t=0}=0\,,
\ee
where $E^{in}$ is defined as in (\ref{CompatIn}).

This is precisely the regularized Vlasov-Maxwell system studied by Rein in \cite{ReinCMS}, to which we refer for the existence of a solution.

That $f_\eps\ge 0$ a.e. on $\bR_+\times\bR^3\times\bR^3$ and $t\mapsto\|f_\eps(t,\cdot,\cdot)\|_{L^p(\bR^3\times\bR^3})$ is constant on 
$\bR_+$ follows from the method of characteristics for the transport equation satisfied by $f_\eps$ since $E_\eps$ and $B_\eps$ are smooth
with bounded derivatives of all orders on $\bR_+\times\bR^3\times\bR^3$ and the vector field 
$$
(x,\xi)\mapsto(v(\xi),E_\eps(t,x)+v(\xi)\wedge B_\eps(t,x))
$$
is divergence free, while $f^{in}\ge 0$ a.e. on $\bR^3\times\bR^3$ and $f^{in}\in L^p(\bR^3\times\bR^3)$ for each $p\in[1,+\infty]$.

We briefly recall from \cite{ReinCMS} the elegant argument leading to the conservation of the (pseudo-)energy for (\ref{VMReg}), which is the 
reason for the specific regularization procedure chosen in (\ref{VMReg}). For simplicity, we assume that $f^{in}\in C^1_c(\bR^3\times\bR^3)$, 
so that, by the method of characteristics, $f_\eps\in C^1_c([0,T]\times\bR^3\times\bR^3)$ for each $T>0$. Then
\be\lb{Energ1}
\frac{d}{dt}\iint_{\bR^3\times\bR^3}e(\xi)f_\eps(t,x,\xi)dxd\xi=\int_{\bR^3}E_\eps\cdot j_{f_\eps}(t,x)dx\,.
\ee
By uniqueness of the solution of the Maxwell system, $E_\eps=\chi_\eps\star_x\tilde E_\eps$, so that
$$
\ba
\int_{\bR^3}E_\eps\cdot j_{f_\eps}(t,x)dx&=\int_{\bR^3}(\chi_\eps\star_x\tilde E_\eps)\cdot j_{f_\eps}(t,x)dx
\\
&=\int_{\bR^3}\tilde E_\eps\cdot(\chi_\eps\star_xj_{f_\eps})(t,x)dx
\ea
$$
where the last equality follows from the fact that $\chi_\eps$ is even. On the other hand, the classical computation leading to the conservation
of energy in the Maxwell system (\ref{Maxw}) is as follows:
\be\lb{Energ2}
\ba
\frac{d}{dt}\int_{\bR^3}&\tfrac12(|\tilde E_\eps|^2+|\tilde B_\eps|^2)(t,x)dx
\\
&=\int_{\bR^3}(\tilde E_\eps\cdot\d_t\tilde E_\eps+\tilde B_\eps\cdot\d_t\tilde B_\eps)(t,x)dx
\\
&=-\int_{\bR^3}\tilde E_\eps\cdot(\chi_\eps\star_xj_{f_\eps})(t,x)dx
\\
&+\int_{\bR^3}(\tilde E_\eps\cdot\Rot_x\tilde B_\eps-\tilde B_\eps\cdot\Rot_x\tilde E_\eps)(t,x)dx\,.
\ea
\ee
Now the last integral above vanishes since
$$
\tilde E_\eps\cdot\Rot_x\tilde B_\eps-\tilde B_\eps\cdot\Rot_x\tilde E_\eps=-\Div_x(E_\eps\wedge B_\eps)
$$
and $B_\eps(t,\cdot)$ has compact support for each $t\ge 0$. Combining (\ref{Energ1}) and (\ref{Energ2}) give the announced conservation
of (pseudo-)energy for (\ref{VMReg}), and therefore for (\ref{VMScalReg}).
\end{proof}

\smallskip
At this point, we introduce the regularized $N$-particle dynamics for the Vlasov-Maxwell system. Define
\be\lb{DefYeps}
Y_\eps=\chi_\eps\star_x\chi_\eps\star_xY\,.
\ee
In other words, $Y_\eps$ is the solution of 
$$
\left\{
\ba
{}&\Box_{t,x} Y_\eps=\de_{t=0}\otimes(\chi_\eps\star_x\chi_\eps)\,,\quad\hbox{ in }\cD'(\bR\times\bR^3)
\\
&\Supp(Y_\eps)\subset\bR_+\times\bR^3
\ea
\right.
$$
or, equivalently, $Y_\eps\rstr_{\bR_+\times\bR^3}$ is the solution of the Cauchy problem
$$
\left\{
\ba
{}&\Box_{t,x} Y_\eps=0\,,\quad x\in\bR^3\,,\,\,t>0\,,
\\
&Y_\eps\rstr_{t=0}=0\,,
\\
&\d_tY_\eps\rstr_{t=0}=\chi_\eps\star_x\chi_\eps\,.
\ea
\right.
$$
The classical theory of regularity and the finite speed of propagation for solutions of the wave equation imply that, for each $\eps>0$,
\be\lb{PtesYeps}
Y_\eps\in C^\infty(\bR_+\times\bR^3)\hbox{ and }\Supp(Y_\eps(t,\cdot))\subset B(0,t+2\eps)\,.
\ee

Next we consider the system of (delay) differential equations
\be\lb{DynSystVMReg}
\left\{
\ba
{}&\dot{x}_i(t)=v(\xi_i(t))
\\
&\dot\xi_i(t)=-\frac1{N}\sum_{j=1}^N\grad_x\d_tY_\eps(t,\cdot)\star_xG(x_i(0)-x_j(0))
\\
&-\frac1{N}\sum_{j=1}^N\int_0^t(\grad_x+v(\xi_j(s))\d_t)Y_\eps(t-s,x_i(t)-x_j(s))ds
\\
&-\frac1{N}\sum_{j=1}^N\int_0^tv(\xi_i(t))\wedge(v(\xi_j(s))\wedge\grad_xY_\eps(t-s,x_i(t)-x_j(s)))ds\,.
\\
&\hskip9.8cm {i=1,\ldots,N\,.}
\ea
\right.
\ee
Like (\ref{DynSystVM}), this is a system of $N$ coupled delay differential equations. However, unlike in the case of (\ref{DynSystVM}), the r.h.s. of 
the second equation above is obviously well-defined since $Y_\eps\in C_c^\infty([0,T]\times\bR^3)$ for each $T>0$.

Comparing the original dynamics (\ref{DynSystVM}) with its regularized analogue (\ref{DynSystVMReg}), one notices that, in (\ref{DynSystVM})
the force acting on the $i$-th particle is the electric or electromagnetic force created by the $N-1$ other particles. Self-interaction --- i.e. the action 
of the electromagnetic field created by the $i$-th particle on itself --- is therefore neglected in (\ref{DynSystVM}), unlike in the regularized model
(\ref{DynSystVMReg}). As a matter of fact, there are serious conceptual difficulties with the notion of self-force in classical electrodynamics, which 
are beyond the scope of the present paper. The interested reader is referred to the discussion on pp. 675--676 of \cite{ElsKiesRic09} --- see also 
chapter 16 of \cite{Jackson} and especially \cite{Spohn04} for more on this subject. That the self-interaction force is present in the regularized 
model (\ref{DynSystVMReg}) is only for mathematical convenience (see below).

In fact, if the regularizing parameter is kept fixed, neglecting the self-interaction force in (\ref{DynSystVMReg}) will only produce an error of order
$O(1/N)$ over finite time intervals. 

Indeed, consider instead of (\ref{DynSystVMReg}) the following regularized variant of (\ref{DynSystVM}):
\be\lb{DynSystVMReg2}
\left\{
\ba
{}&\dot{\hat x}_i(t)=v(\hat\xi_i(t))
\\
&\dot{\hat\xi}_i(t)=-\frac1{N-1}\sum_{j=1\atop j\not=i}^N\grad_x\d_tY_\eps(t,\cdot)\star_xG(\hat x_i(0)-\hat x_j(0))
\\
&-\frac1{N-1}\sum_{j=1\atop j\not=i}^N\int_0^t(\grad_x+v(\hat \xi_j(s))\d_t)Y_\eps(t-s,\hat x_i(t)-\hat x_j(s))ds
\\
&-\frac1{N-1}\sum_{j=1\atop j\not=i}^N\int_0^tv(\hat\xi_i(t))\wedge(v(\hat\xi_j(s))\wedge\grad_xY_\eps(t-s,\hat x_i(t)-\hat x_j(s)))ds\,.
\\
&\hskip9.8cm {i=1,\ldots,N\,.}
\ea
\right.
\ee

That the regularized dynamics defined by (\ref{DynSystVMReg}) and (\ref{DynSystVMReg2}) are equivalent on finite time intervals as $N\to+\infty$
is summarized in the following statement.

\begin{Prop}\lb{P-EquivDynSyst12}
Let $\eps,T>0$ and $N\ge 2$. If $(x_i,\xi_i)_{1\le i\le N}$ and $(\hat x_i,\hat\xi_i)_{1\le i\le N}$ are solutions of (\ref{DynSystVMReg}) and of 
(\ref{DynSystVMReg2}) respectively on $[0,T]$, and if 
$$
x_i(0)=\hat x_i(0)\,,\quad\xi_i(0)=\hat\xi_i(0)\quad\hbox{ for }i=1,\ldots,N,
$$
then one has the error estimate
$$
|\hat x_i(t)-x_i(t)|+|\hat\xi_i(t)-\xi_i(t)|=O(1/N)
$$
for all $i=1,\ldots,N$ and all $t\in[0,T]$.
\end{Prop}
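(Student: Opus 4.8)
The plan is to rewrite both (\ref{DynSystVMReg}) and (\ref{DynSystVMReg2}) as integral equations, by integrating the velocity and momentum equations from $0$ to $t$, and then to compare them via a Gronwall argument in the running maximum of the pairwise trajectory error. The crux is an elementary algebraic observation: term by term, the two right-hand sides for the momentum differ only through (i) the self-interaction contribution $j=i$, which appears in (\ref{DynSystVMReg}) with prefactor $1/N$, and (ii) the mismatch between the prefactors $1/N$ and $1/(N-1)$ on the $N-1$ cross terms $j\neq i$. Since
$$
\frac1N-\frac1{N-1}=-\frac1{N(N-1)}\,,
$$
the cumulative effect of (ii) over $N-1$ terms is of size $1/N$, and (i) is manifestly $O(1/N)$. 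This is precisely why the self-interaction term is retained in (\ref{DynSystVMReg}): it makes the two sums differ by a controlled $O(1/N)$ amount rather than by a single, mis-normalized missing term.

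For this bookkeeping to yield a genuine $O(1/N)$ bound, each individual interaction term must be bounded uniformly in $N$ and in the trajectory variables, and Lipschitz in them. This is furnished by (\ref{PtesYeps}): on $[0,T]$ the functions $Y_\eps$, $\grad_xY_\eps$, $\d_tY_\eps$ and $\grad_x\d_tY_\eps\star_xG$ are smooth with compact support, hence bounded together with their first derivatives, while $|v(\xi)|\le1$ and $v\in C^\infty_b(\bR^3;\bR^3)$. Setting $\delta_i(t):=|\hat x_i(t)-x_i(t)|+|\hat\xi_i(t)-\xi_i(t)|$ and $D(t):=\max_{1\le i\le N}\sup_{0\le s\le t}\delta_i(s)$, I would split the difference of the two momentum right-hand sides into a structural part (same trajectories, different combinatorics), bounded by $C(\eps,T)/N$ by the previous paragraph, and a trajectory part (same combinatorics, trajectories replaced by their hatted counterparts), bounded by Lipschitz continuity. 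The initial-data term contributes nothing to the trajectory part since $x_i(0)=\hat x_i(0)$. After integrating in time — which also turns the velocity equation into $|\hat x_i(t)-x_i(t)|\le\Lip(v)\int_0^tD(s)\,ds$ — one arrives at
$$
D(t)\le\frac{C(\eps,T)}{N}\,t+C'(\eps,T)\int_0^tD(s)\,ds\,,
$$
and Gronwall's lemma gives $D(t)\le C(\eps,T)\,t\,e^{C'(\eps,T)t}/N=O(1/N)$ uniformly on $[0,T]$, which is the assertion.

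The one genuinely delicate point, and the main obstacle, is the delay structure of the equations: the force on particle $i$ at time $t$ depends on the whole past history $\{(x_j(s),\xi_j(s)):0\le s\le t\}$ together with the present position $x_i(t)$. This forces the Gronwall functional to be the nondecreasing running maximum $D(t)$ rather than a pointwise error, and it requires some care with the third (magnetic) term, where the factor $v(\xi_i(t))$ sits outside the retarded integral; its variation produces, before integration, a contribution proportional to $D(t)$ that is not yet under a time integral. The resolution is that this contribution arises in the bound for $\dot{\hat\xi}_i-\dot\xi_i$, so the extra time integration carrying $\dot\xi_i$ to $\xi_i$ converts it into $\int_0^tD(s)\,ds$, keeping the inequality in its integral form. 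The remainder is the routine verification that the kernels $\grad_xY_\eps$, $\d_tY_\eps$ and their relevant compositions are Lipschitz with constants depending only on $\eps$ and $T$.
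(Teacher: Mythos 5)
Your proposal is correct and follows essentially the same route as the paper: the same splitting of the momentum right-hand sides into the prefactor mismatch $\frac1N-\frac1{N-1}=-\frac1{N(N-1)}$ over the $N-1$ cross terms, the $O(1/N)$ self-interaction term, and the Lipschitz trajectory comparison using the bounds on $Y_\eps$ from (\ref{PtesYeps}), with the pointwise-in-$t$ contribution (from $\hat x_i(t)-x_i(t)$ and $v(\hat\xi_i(t))-v(\xi_i(t))$) absorbed by integrating the momentum equation in time. The only (harmless) difference is the Gronwall closure: you run a single Gronwall pass on the running maximum $D(t)$ over all particles, whereas the paper first sums over $i$ to close an inequality for the particle-averaged error $\cE(t)=\frac1N\sum_{i=1}^N\cE_i(t)$, applies Gronwall to it, and then substitutes back to bound each individual $\cE_i$ by a second application of Gronwall.
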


\begin{proof}
Observe that, for each $N\ge 2$ and 
$$
\ba
{}&\Big|\frac1{N-1}\sum_{j=1\atop j\not=i}^N\int_0^t(\grad_x+v(\hat \xi_j(s))\d_t)Y_\eps(t-s,\hat x_i(t)-\hat x_j(s))ds
\\
&\qquad\qquad\qquad-\frac1{N}\sum_{j=1}^N\int_0^t(\grad_x+v(\xi_j(s))\d_t)Y_\eps(t-s,x_i(t)-x_j(s))ds\Big|
\\
&\le
\frac1{N(N-1)}\sum_{j=1\atop j\not=i}^N\int_0^t|(\grad_x+v(\hat \xi_j(s))\d_t)Y_\eps(t-s,\hat x_i(t)-\hat x_j(s))|ds
\\
&+
\frac1{N}\sum_{j=1\atop j\not=i}^N\int_0^t\big|(\grad_x+v(\hat \xi_j(s))\d_t)Y_\eps(t-s,\hat x_i(t)-\hat x_j(s))
\\
&\qquad\qquad\qquad\qquad\qquad-(\grad_x+v(\xi_j(s))\d_t)Y_\eps(t-s,x_i(t)-x_j(s))\big|ds
\\
&+\frac1N\int_0^t\big|(\grad_x+v(\xi_i(s))\d_t)Y_\eps(t-s,x_i(t)-x_i(s))\big|ds
\\
&\le
\frac{2t}{N-1}\|\grad_{t,x}Y_\eps\|_{L^\infty([0,T]\times\bR^3)}
\\
&+\frac1{N}\sum_{j=1\atop j\not=i}^N\|\grad_{t,x}^2 Y_\eps\|_{L^\infty([0,T]\times\bR^3)}\int_0^t|\hat x_j(s)-x_j(s)|ds
\\
&+\frac1{N}\sum_{j=1\atop j\not=i}^N\|\d_tY_\eps\|_{L^\infty([0,T]\times\bR^3)}\|\grad_\xi v\|_{L^\infty(\bR^3)}\int_0^t|\hat\xi_j(s)-\xi_j(s)|ds
\\
&+2T\|\grad_{t,x}^2 Y_\eps\|_{L^\infty([0,T]\times\bR^3)}|\hat x_i(t)-x_i(t)|\,.
\\
\ea
$$

The difference between the right-hand sides of the equations for $\dot\xi_i$ in (\ref{DynSystVMReg}) and (\ref{DynSystVMReg2}) involves two
more terms analogous to this one.

Thus, denoting
$$
\cE_i(t):=|\hat x_i(t)-x_i(t)|+|\hat\xi_i(t)-\xi_i(t)|\,,
$$
we see that, for each $i=1,\ldots,N$ and all $t\in[0,T]$, one has
$$
\ba
\cE_i(t)\le C_\eps\left(\int_0^t\cE_i(\tau)d\tau+\frac1N\sum_{j=1}^N\int_0^t\int_0^\tau\cE_i(s)dsd\tau+\frac{4t+2}{N}\right)
\\
=C_\eps\left(\int_0^t\cE_i(\tau)d\tau+\frac1N\sum_{j=1}^N\int_0^t(t-s)\cE_i(s)ds+\frac{4t+2}{N}\right)
\\
\le C_\eps\left(\int_0^t\cE_i(\tau)d\tau+\frac{T}{N}\sum_{j=1}^N\int_0^t\cE_i(s)ds+\frac{4T+2}{N}\right)\,,
\ea
$$
since $\cE_i(0)=0$, where $C_\eps$ is a constant depending on 
$$
\ba
\|\grad_{t,x}Y_\eps\|_{L^\infty([0,T]\times\bR^3)}\,,\,\,\|\grad_{t,x}^2Y_\eps\|_{L^\infty([0,T]\times\bR^3)}\,,\,\,\|G\star_x\grad_xY_\eps\|_{L^\infty([0,T]\times\bR^3)}&
\\
\hbox{ and }\|\grad_\xi v\|_{L^\infty(\bR^3)}&\,.
\ea
$$

Therefore, summing for $i=1,\ldots,N$ both sides of the inequality above shows that
$$
\cE(t):=\frac1N\sum_{i=1}^N\cE_i(t)\le C_\eps\left((1+T)\int_0^t\cE(s)ds+\frac{4T+2}{N}\right)
$$
and we conclude from Gronwall's inequality that
$$
\cE(t)\le C_\eps\frac{4T+2}{N}e^{C_\eps(1+T)t}\,,\quad 0\le t\le T\,.
$$
Substituting this in the inequality satisfied by $\cE_i$, we find that
$$
\ba
\cE_i(t)\le C_\eps\left(\int_0^t\cE_i(\tau)d\tau+T\int_0^t\cE(s)ds+\frac{4T+2}{N}\right) 
\\
\le C_\eps\int_0^t\cE_i(\tau)d\tau+\frac{4T}Ne^{C_\eps(1+T)t}+\frac{2}{N}
\ea
$$
so that, applying again Gronwall's inequality leads to
$$
\cE_i(t)\le\frac2N(1+2Te^{C_\eps(1+T)T})e^{C_\eps t}\,,\quad 0\le t\le T\,,\,\,i=1,\ldots,N\,.
$$
\end{proof}

%%%%%%%%%%%%%%%%%%%%%%%%%%%%%%%%%%%%%%%%%%%%%%%%%%%%%%%%%%%%%%%%%%%%%%%%%%
\section{Dobrushin's estimate}
%%%%%%%%%%%%%%%%%%%%%%%%%%%%%%%%%%%%%%%%%%%%%%%%%%%%%%%%%%%%%%%%%%%%%%%%%%

Let $k$ be a $\bR^n$-valued Radon measure on $\bR^{1+d}\times\bR^{1+d}$ of the form
\be\lb{Form-k}
k(t,z,\tau,\zeta)=m(t,z,\zeta)\de_{\tau=0}+r(t,z,\tau,\zeta)\,,\quad z,\zeta\in\bR^d\,,\,\,s,t\ge 0\,.
\ee
We assume that $r\in C_b(\bR^{1+d}\times\bR^{1+d};\bR^n)$ and $m\in C_b(\bR^{1+d}\times\bR^d;\bR^n)$ satisfy the properties
\be\lb{Causal}
\ba
r(t,z,\tau,\zeta)&=0\quad\hbox{ whenever }\tau\notin[0,t]\hbox{ and }z,\zeta\in\bR^d\,,
\\
m(t,z,\zeta)&=0\quad\hbox{ whenever }t<0\hbox{ and }z,\zeta\in\bR^d\,,
\ea
\ee
together with the Lipschitz condition
\be\lb{Lip}
\ba
|r(t,z,\tau,\zeta)-r(t,z',\tau,\zeta')|+|m(t,z,\zeta_0)-r(t,z',\zeta_0')|
\\
\le\Lip_T(k)(1\wedge|z-z'|+1\wedge|\zeta-\zeta'|+1\wedge|\zeta_0-\zeta_0'|)
\ea
\ee
for all $t,t',\tau,\tau'\in[0,T]$ and all $z,z',\zeta,\zeta',\zeta_0,\zeta_0'\in\bR^d$, where the notation $a\wedge b$ designates $\min(a,b)$ for all
$a,b\in\bR$.

We consider the integral operator with kernel $k$, i.e.
$$
K\mu(t,z)=\int_{\bR^{1+d}}r(t,z,\tau,\zeta)\mu(\tau,d\zeta)d\tau+\int_{\bR^d}m(t,z,\zeta)\mu(0,d\zeta)
$$
for each $\mu\in C(\bR_+;w-\cP(\bR^d))$, where $w-\cP(\bR^d)$ designates the set of Borel probability measures on $\bR^d$ equipped with its
weak topology.

For any given Borel probability measure $\rho^{in}$ on $\bR^d$, consider $Z\equiv Z(t,z;\rho^{in})$, the solution of the mean-field integro-differential 
equation
\be\lb{MFEq}
\left\{
\begin{array}{l}
\dot{Z}=K\rho(t,Z)\,,
\\
\rho(t,\cdot)=Z(t,\cdot;\rho^{in})\#\rho^{in}\,,
\\
Z(0,z;\rho^{in})=z\,.
\end{array}
\right.
\ee
(If $\Phi:\,\bR^d\mapsto\bR^d$ is a Borel transformation and $\rho$ a Borel probability measure on $\bR^d$, the notation $\Phi\#\rho$ designates 
the image measure of $\rho$ under $\Phi$, defined by the formula $\Phi\#\rho(A)=\rho(\Phi^{-1}(A))$ for each Borel set $A\subset\bR^d$.)

\begin{Prop}\lb{P-ExistUniqMFEq}
For each Borel probability measure $\rho^{in}$ on $\bR^d$, the mean-field integro-differential equation (\ref{MFEq}) has a unique global 
solution $Z\in C(\bR_+\times\bR^d;\bR^d)$ such that $t\mapsto Z(t,z)$ is continuously differentiable on $\bR_+$ for each $z\in\bR^d$.
\end{Prop}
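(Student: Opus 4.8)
The plan is to recast (\ref{MFEq}) as a fixed point problem for the flow map $Z$ and to solve it by a Banach fixed point argument, in the spirit of Dobrushin. Substituting the second line of (\ref{MFEq}) into the definition of $K$ and performing the change of variables under the pushforward $Z(\tau,\cdot)\#\rho^{in}$, the coupled system collapses into a single closed integro-differential equation
$$\dot Z(t,z) = \int_0^t\!\!\int_{\bR^d} r(t,Z(t,z),\tau,Z(\tau,w))\,\rho^{in}(dw)\,d\tau + \int_{\bR^d} m(t,Z(t,z),w)\,\rho^{in}(dw),$$
where the $\tau$-integral is automatically restricted to $[0,t]$ by the causality hypothesis (\ref{Causal}), with $Z(0,\cdot)=\mathrm{id}$. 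Writing this in Duhamel form defines the map $(\Phi Z)(t,z) := z + \int_0^t K\rho_Z(s,Z(s,z))\,ds$, where $\rho_Z(s,\cdot):=Z(s,\cdot)\#\rho^{in}$; a fixed point of $\Phi$ is precisely a solution of (\ref{MFEq}).

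First I would fix $T>0$ and set up the complete metric space on which $\Phi$ acts. Since $r$ and $m$ are bounded, $|K\rho_Z(s,y)|\le s\|r\|_\infty+\|m\|_\infty\le T\|r\|_\infty+\|m\|_\infty=:M_T$ uniformly, so $\Phi$ produces flows with the a priori displacement bound $|(\Phi Z)(t,z)-z|\le M_T t$; moreover, since $r,m$ are continuous and $\rho_Z$ is weakly continuous in time, $K\rho_Z$ is continuous in $(s,y)$ and $\Phi Z$ is continuous. I would therefore take $\cX_T$ to be the set of continuous $Z\colon[0,T]\times\bR^d\to\bR^d$ with $Z(0,\cdot)=\mathrm{id}$ and $|Z(t,z)-z|\le M_T t$, equipped with the exponentially weighted, truncated sup-metric $d_{T,\l}(Z_1,Z_2)=\sup_{0\le t\le T}e^{-\l t}\sup_{z}\big(1\wedge|Z_1(t,z)-Z_2(t,z)|\big)$, which makes $\cX_T$ complete and into which $\Phi$ maps.

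The heart of the argument is the contraction estimate, where the Lipschitz hypothesis (\ref{Lip}) enters. Writing $D(s):=\sup_z\big(1\wedge|Z_1(s,z)-Z_2(s,z)|\big)$, I would estimate the integrand of $(\Phi Z_1)(t,z)-(\Phi Z_2)(t,z)$ by splitting the differences of the $r$-terms and of the $m$-terms and applying (\ref{Lip}) slot by slot: the spatial slot contributes $1\wedge|Z_1(s,z)-Z_2(s,z)|\le D(s)$ (with a factor $s+1$ from $\tau$-integrating the $r$-term together with the $m$-term), while the fourth slot of $r$ integrates against the probability measure $\rho^{in}$ to $\int 1\wedge|Z_1(\tau,w)-Z_2(\tau,w)|\,\rho^{in}(dw)\le D(\tau)$. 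This yields
$$\sup_z\big(1\wedge|(\Phi Z_1)(t,z)-(\Phi Z_2)(t,z)|\big)\le \Lip_T(k)\int_0^t\Big[(s+1)D(s)+\int_0^s D(\tau)\,d\tau\Big]ds.$$
Against the weight $e^{-\l t}$ the single and double time integrals contribute factors $O(1/\l)$ and $O(1/\l^2)$, so choosing $\l=\l(T,\Lip_T(k))$ large enough makes $\Phi$ a strict contraction on $(\cX_T,d_{T,\l})$. The Banach fixed point theorem gives a unique $Z$ on $[0,T]$; letting $T\to\infty$ and using uniqueness to glue produces the global solution, and the identity $Z(t,z)=z+\int_0^t K\rho_Z(s,Z(s,z))\,ds$ with continuous integrand shows $t\mapsto Z(t,z)$ is $C^1$.

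The main obstacle I anticipate is the nonlocal-in-time (memory) structure carried by the $\tau$-integral over the entire past $[0,t]$: unlike an ordinary ODE it produces the iterated time integral above, so a plain short-time Gronwall contraction in the unweighted sup-norm does not close on an interval of fixed length. The exponentially weighted metric is the device that absorbs this term and delivers a contraction on all of $[0,T]$ at once; alternatively one may propagate a short-time contraction step by step, treating the already-determined past as a known Lipschitz forcing. A secondary point requiring care is tracking the truncation $1\wedge|\cdot|$ consistently throughout, and verifying that $\rho_Z$ is weakly continuous so that $K\rho_Z$ is well defined and continuous.
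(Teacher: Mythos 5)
Your proof is correct, and its skeleton coincides with the paper's: both recast (\ref{MFEq}) as the fixed-point problem $Z(t,z)=z+\int_0^t K(Z\#\rho^{in})(s,Z(s,z))\,ds$, and both rest on the same key Lipschitz estimate --- your bound $\Lip_T(k)\int_0^t\big[(s+1)D(s)+\int_0^s D(\tau)\,d\tau\big]ds$ is exactly the paper's inequality (\ref{E-Etilde}), which the paper merely simplifies to $\Lip_T(k)(1+t)\int_0^t\cE(s)\,ds$ via the identity $(1+s)+(t-s)=1+t$. The only genuine difference is how the contraction is closed. The paper runs Picard iteration and iterates its estimate $n$ times to obtain the factorial bound $\Lip_T(k)^n(1+t)^n t^n/n!$, which yields both convergence of the successive approximations and uniqueness (letting $n\to+\infty$); this needs no weighted norm, no invariant set, and works directly with the untruncated sup-difference $\cE(t)=\sup_z|Z_1(t,z)-Z_2(t,z)|$ (legitimate because $1\wedge a\le a$, so the Lipschitz hypothesis (\ref{Lip}) still applies). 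You instead use a Bielecki-type exponentially weighted, truncated metric on the displacement-bounded class $\cX_T$ and invoke the Banach fixed point theorem, with the weight $e^{-\l t}$ absorbing the memory term. Your route buys a one-shot strict contraction at the cost of extra bookkeeping (completeness of $\cX_T$, invariance under $\Phi$), and it proves uniqueness a priori only within $\cX_T$; to obtain the proposition's full uniqueness statement you should add the easy remark that any solution of (\ref{MFEq}) automatically satisfies $|Z(t,z)-z|\le M_T t$ --- since $|K\rho(t,y)|\le t\|r\|_\infty+\|m\|_\infty$ --- and hence lies in $\cX_T$, so nothing is lost.
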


\begin{proof}
To any bounded map $Z$ belonging to $C(\bR_+\times\bR^d;\bR^d)$, we associate the map $\tilde Z$ in $C(\bR_+\times\bR^d;\bR^d)$ 
defined by
$$
\ba
\tilde Z(t,z)&=z+\int_0^tK(Z(\cdot,\cdot)\#\rho^{in})(\tau,Z(\tau,z))d\tau
\\
&=z+\int_0^t\int_0^s\int_{\bR^d}r(s,Z(s,z),\tau,Z(\tau,\zeta))\rho^{in}(d\zeta)d\tau ds
\\
&\qquad\qquad\qquad\quad+\int_0^t\int_{\bR^d}m(s,Z(s,z),\zeta)\rho^{in}(d\zeta)ds\,.
\ea
$$
Let $Z_1,Z_2\in C(\bR_+\times\bR^d;\bR^d)$ and denote $\tilde Z_1,\tilde Z_2$ the maps so defined in terms of $Z_1,Z_2$. Then, by the 
Lipschitz condition (\ref{Lip})
$$
\ba
|\tilde Z_1(t,z)-&\tilde Z_2(t,z)|
\\
&\le\Lip_T(k)\int_0^t\int_0^s\int_{\bR^d}|Z_1(s,z)-Z_2(s,z)|\rho^{in}(d\zeta)d\tau ds
\\
&+\Lip_T(k)\int_0^t\int_0^s\int_{\bR^d}|Z_1(\tau,\zeta)-Z_2(\tau,\zeta)|\rho^{in}(d\zeta)d\tau ds
\\
&+\Lip_T(k)\int_0^t\int_{\bR^d}|Z_1(s,z)-Z_2(s,z)|\rho^{in}(d\zeta)ds\,,
\ea
$$
for all $t\in[0,T]$. Denote
$$
\left\{
\ba
\cE(t):=\sup_{z\in\bR^N}|Z_1(t,z)-Z_2(t,z)|\,,
\\
\tilde\cE(t):=\sup_{z\in\bR^N}|\tilde Z_1(t,z)-\tilde Z_2(t,z)|\,,
\ea
\right.
$$
for all $t\ge 0$. Then, for all $t\in[0,T]$, one has
\be\lb{E-Etilde}
\ba
\tilde\cE(t)\le\Lip_T(k)\left(\int_0^t(1+s)\cE(s)ds+\int_0^t\int_0^s\cE(\tau)d\tau ds\right)
\\
=\Lip_T(k)\left(\int_0^t(1+s)\cE(s)ds+\int_0^t(t-\tau)\cE(\tau)d\tau\right)
\\
=\Lip_T(k)(1+t)\int_0^t\cE(s)ds\,.
\ea
\ee

With this estimate, we construct the solution by the usual Picard iteration procedure. Define recursively a sequence of maps $(Z^n)_{n\ge 0}$
by 
$$
\left\{
\ba
{}&Z^{n+1}(t,z):=z+\int_0^tK(Z^{n}(\cdot,\cdot)\#\rho^{in})(\tau,Z^n(\tau,z))d\tau\,,\quad t\ge 0\,,
\\
&Z^0(t,z):=z\,.
\ea
\right.
$$
Applying (\ref{E-Etilde}) iteratively with the notation 
$$
\cE^n(t)=\sup_{z\in\bR^d}|Z^{n+1}(t,z)-Z^{n}(t,z)|
$$
shows that, for all $t\in[0,T]$, one has
\be\lb{Picard}
\ba
\cE^n(t)&\le\Lip_T(k)(1+t)\int_0^t\cE^{n-1}(t_1)dt_1
\\
&\le\Lip_T(k)^2(1+t)\int_0^t(1+t_1)\int_0^{t_1}\cE^{n-2}(t_2)dt_2dt_1
\\
&\le\Lip_T(k)^2(1+t)^2\int_0^t\int_0^{t_1}\cE^{n-2}(t_2)dt_2dt_1
\\
&\ldots
\\
&\le\Lip_T(k)^n(1+t)^n\frac{t^n}{n!}\sup_{0\le t_n\le t}\cE^{0}(t_n)
\ea
\ee
since
$$
\int\indc_{0\le t_n\le t_{n-1}\le\ldots\le t_1\le t}dt_ndt_{n-1}\ldots dt_1=\frac{t^n}{n!}\,.
$$

Since
$$
\sum_{n\ge 0}\frac{\Lip_T(k)^n(1+t)^nt^n}{n!}<+\infty
$$
for each $t>0$, we conclude that the sequence $Z^n$ converges uniformly on $[0,T]\times\bR^d$ for each $T>0$ towards a solution of the
integral equation
\be\lb{IntEq}
Z(t,z)=z+\int_0^tK(Z(\cdot,\cdot)\#\rho^{in})(\tau,Z(\tau,z))d\tau
\ee
defined for each $t\ge 0$ and such that $Z\in C(\bR_+\times\bR^d;\bR^d)$.

If $Z_1$ and $Z_2$ are two solutions of that integral equation, denoting as above
$$
\cE(t)=\sup_{z\in\bR^d}|Z_1(t,z)-Z_2(t,z)|\,,
$$
we conclude from (\ref{E-Etilde}) that
$$
\cE(t)\le\Lip_T(k)(1+t)\int_0^t\cE(s)ds\,,
$$
and, following the argument in (\ref{Picard}), that
$$
\sup_{0\le t\le T}\cE(t) \le\Lip_T(k)^n(1+T)^n\frac{T^n}{n!}\sup_{0\le t\le T}\cE(t)\,.
$$
Letting $n\to+\infty$ while keeping $T>0$ fixed shows that $\cE(t)=0$ for each $t\in[0,T]$; and since $T>0$ is arbitrary, $Z_1=Z_2$ on 
$\bR_+\times\bR^d$. This establishes the uniqueness of the solution $Z$ of the integral equation (\ref{IntEq}). 

Finally, since the integral kernels $r$ and $m$ are bounded continuous functions of all their arguments while $\rho^{in}$ is a probability measure 
on $\bR^d$,  applying the dominated convergence theorem shows that $t\mapsto K(Z(\cdot,\cdot)\#\rho^{in})(t,z)$ is continuous for all $z\in\bR^d$, 
so that $t\mapsto Z(t,z)$ is continuously differentiable in $t$ for all $z\in\bR^d$ and satisfies the differential form (\ref{MFEq}) of the integral equation 
(\ref{IntEq}).
\end{proof}

\smallskip
Once the existence and uniqueness of the solution of the mean-field equation (\ref{MFEq}) is known, the next natural question is whether
its dependence in the initial probability measure $\rho^{in}$ is continuous. The right topology in which to measure the proximity of two 
initial probability densities $\rho_1^{in},\rho_2^{in}$ is obviously that of the weak convergence of probability measures on $\bR^d$. An
extremely convenient way to obtain quantitative information in that topology is to use the Monge-Kantorovich-Rubinstein distance, whose
definition we recall below.

\begin{Def}
Let $\cP(\bR^d)$ be the set of Borel probability measures on $\bR^d$. The Monge-Kantorovich-Rubinstein distance on $\cP(\bR^d)$ is
defined by the formula
$$
\Dist_{MKR}(\mu,\nu)=\inf_{\pi\in\Pi(\mu,\nu)}\iint_{\bR^d\times\bR^d}1\wedge|x-y|\pi(dxdy)
$$
where $\Pi(\mu,\nu)$ is the set of Borel probability measures on $\bR^d\times\bR^d$ whose marginals are $\mu$ and $\nu$, i.e.
$$
\iint_{\bR^d\times\bR^d}\phi(x)\psi(y)\pi(dxdy)=\int_{\bR^d}\phi(x)\mu(dx)\int_{\bR^d}\psi(y)\nu(dy)
$$
for all bounded continuous functions $\phi,\psi$ defined on $\bR^d$. 
\end{Def}

This distance is also called the Wasserstein distance --- although it was considered more than ten years earlier by Kantorovich and 
Rubinstein, and appears in the work of Monge.

Our (slight) generalization of the Dobrushin estimate presented in \cite{Dobrushin} is stated in the following proposition.

\begin{Prop}\lb{P-DobrushIneq}
Let $\rho_1^{in}$ and $\rho_2^{in}$ be two Borel probability measures on $\bR^d$, and let $Z_1,Z_2$ be the corresponding solutions of 
(\ref{MFEq}), denoted 
$$
Z_j(t,z):=Z(t,z;\rho_j^{in})\hbox{ for }j=1,2,\quad t\ge 0,\hbox{ and }z\in\bR^d,
$$
whose existence and uniqueness is established by Proposition \ref{P-ExistUniqMFEq}, and 
$$
\rho_j(t,\cdot):=Z_j(t,\cdot)\#\rho_j^{in}\,,\quad\hbox{ for } j=1,2\,.
$$
Then, for each $t\ge 0$,
$$
\Dist_{MKR}(\rho_1(t,\cdot),\rho_2(t,\cdot))\le(1+t\Lip_t(k))e^{t^2\Lip_t(k)}\Dist_{MKR}(\rho_1^{in},\rho_2^{in})\,.
$$
\end{Prop}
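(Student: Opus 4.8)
The plan is to run Dobrushin's coupling argument, adapted to the retarded force $K$. Fix $t\ge 0$ and abbreviate $L:=\Lip_t(k)$. Since the cost $1\wedge|x-y|$ is bounded and continuous, the infimum defining $\Dist_{MKR}(\rho_1^{in},\rho_2^{in})$ is attained at some optimal coupling $\pi^{in}\in\Pi(\rho_1^{in},\rho_2^{in})$ (a near-optimal one would do just as well). The starting point is the observation that, because $\rho_j(t,\cdot)=Z_j(t,\cdot)\#\rho_j^{in}$, the image of $\pi^{in}$ under the map $(a,b)\mapsto(Z_1(t,a),Z_2(t,b))$ is a coupling of $\rho_1(t,\cdot)$ and $\rho_2(t,\cdot)$. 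Transporting the fixed initial coupling along the two flows therefore controls the distance:
$$
\Dist_{MKR}(\rho_1(t,\cdot),\rho_2(t,\cdot))\le\Dlt(t):=\iint 1\wedge|Z_1(t,a)-Z_2(t,b)|\,\pi^{in}(da\,db)\,,
$$
so it suffices to produce a Gronwall inequality for $\Dlt$.

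Next I would subtract the integral forms (\ref{IntEq}) of the two flows, using $Z_j(0,\cdot)=\mathrm{id}$, so that $Z_1(t,a)-Z_2(t,b)=(a-b)+\int_0^t(K\rho_1(s,Z_1(s,a))-K\rho_2(s,Z_2(s,b)))\,ds$. The decisive step is to write \emph{both} mean-field forces as integrals against the \emph{same} coupling $\pi^{in}$. Since $\rho_j(\tau,\cdot)=Z_j(\tau,\cdot)\#\rho_j^{in}$ and $\pi^{in}$ has marginals $\rho_1^{in}$ and $\rho_2^{in}$, the field variable in each force may be read off the appropriate marginal, giving
$$
\ba
K\rho_1(s,Z_1(s,a))&-K\rho_2(s,Z_2(s,b))
\\
&=\int_0^s\!\iint\big(r(s,Z_1(s,a),\tau,Z_1(\tau,c))-r(s,Z_2(s,b),\tau,Z_2(\tau,d))\big)\pi^{in}(dc\,dd)\,d\tau
\\
&\quad+\iint\big(m(s,Z_1(s,a),c)-m(s,Z_2(s,b),d)\big)\pi^{in}(dc\,dd)\,.
\ea
$$
Applying the Lipschitz bound (\ref{Lip}) with the truncation $1\wedge(\cdot)$ splits this into three kinds of terms: a \emph{diagonal} term controlled by $1\wedge|Z_1(s,a)-Z_2(s,b)|$, a \emph{retarded} term controlled by $1\wedge|Z_1(\tau,c)-Z_2(\tau,d)|$ coming from $r$, and an \emph{initial} term controlled by $1\wedge|c-d|$ coming from $m$. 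Integrating against $\pi^{in}(da\,db)$, using $1\wedge(X+Y)\le 1\wedge X+Y$ together with the optimality identity $\iint 1\wedge|c-d|\,\pi^{in}=\Dist_{MKR}(\rho_1^{in},\rho_2^{in})$, I obtain a closed integral inequality of the form
$$
\Dlt(t)\le(1+Lt)\,\Dist_{MKR}(\rho_1^{in},\rho_2^{in})+L\int_0^t\Big(s\,\Dlt(s)+\int_0^s\Dlt(\tau)\,d\tau\Big)ds\,.
$$

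Finally I would close the estimate by a Gronwall argument, exactly of the iterative type already used in the proof of Proposition \ref{P-ExistUniqMFEq}. The essential point is that the \emph{retarded} contribution $\int_0^s\Dlt(\tau)\,d\tau$ is a history integral: once organized together with the diagonal term it produces an effective coefficient linear in the running time and hence, after integration, a double time integration of $\Dlt$. It is precisely this extra integration in time --- absent in Dobrushin's memoryless Vlasov--Poisson setting --- that replaces the usual growth $e^{Ct}$ by $e^{t^2\Lip_t(k)}$, while the initial ($m$) contribution is absorbed into the linear prefactor $1+t\Lip_t(k)$; Gronwall's inequality then yields the asserted bound via the reduction of the first paragraph. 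The main obstacle is the middle step: one must recognize that the two a priori different mean-field forces can be integrated against one and the same fixed coupling $\pi^{in}$ --- this is what makes the Lipschitz estimate close --- and then keep the retarded and diagonal contributions separated so that the retardation alone is responsible for the $t^2$ in the exponent. The optimal-transport reduction and the concluding Gronwall step are routine by comparison.
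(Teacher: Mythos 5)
Your proposal is correct and follows essentially the same route as the paper's own proof: both mean-field forces are rewritten as integrals against one fixed initial coupling $\pi^{in}$ via the push-forward identity, the Lipschitz condition (\ref{Lip}) splits the difference into diagonal, retarded and initial contributions, and integrating against $\pi^{in}$ yields the same closed inequality $\cD(t)\le(1+t\Lip_t(k))\cD(0)+\Lip_t(k)\,t\int_0^t\cD(s)\,ds$, which Gronwall then converts into the stated bound. The only cosmetic difference is that you fix a (near-)optimal coupling at the outset, whereas the paper runs the argument for an arbitrary coupling and minimizes over $\Pi(\rho_1^{in},\rho_2^{in})$ at the very end.
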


The main difference between this result and Proposition 4 in \cite{Dobrushin} is that the proposition above applies to an integro-differential
equation of the form (\ref{MFEq}) satisfying the causality assumption (\ref{Causal}), whereas the differential system considered in Dobrushin's
original contribution \cite{Dobrushin} did not involve memory effects. This is only natural since Dobrushin was interested in the Vlasov-Poisson 
equation, a non-relativistic model in which the force field is created by the instantaneous distribution of charges. 

There are other minor differences: for instance, unlike Dobrushin's the estimate obtained here is local in time and uses the Gronwall 
inequality; besides, the integral operator $K$ in (\ref{MFEq}) considered in the proposition above is not necessarily a convolution operator as 
in \cite{Dobrushin}. But the argument is mostly the same as in \cite{Dobrushin}, and we give it below only for the sake of being complete.

\begin{proof}
One has
$$
\ba
K\rho_1&(t,Z_1(t,z_1))-K\rho_2(t,Z_2(t,z_2))
\\
&=
\int_0^t\int_{\bR^d}r(t,Z_1(t,z_1),\tau,\zeta_1)\rho_1(\tau,d\zeta_1)d\tau
\\
&+\int_{\bR^d}m(t,Z_1(t,z_1),\zeta_1)\rho_1^{in}(d\zeta_1)
\\
&-
\int_0^t\int_{\bR^d}r(t,Z_2(t,z_2),\tau,\zeta_2)\rho_2(\tau,d\zeta_2)d\tau
\\
&-\int_{\bR^d}m(t,Z_2(t,z_2),\zeta_2)\rho_2^{in}(d\zeta_2)
\\
&=
\int_0^t\int_{\bR^d}r(t,Z_1(t,z_1),\tau,Z_1(\tau,\zeta_1))\rho_1^{in}(d\zeta_1)d\tau
\\
&+\int_{\bR^d}m(t,Z_1(t,z_1),\zeta_1)\rho_1^{in}(d\zeta_1)
\\
&-
\int_0^t\int_{\bR^d}r(t,Z_2(t,z_2),\tau,Z_2(\tau,\zeta_2))\rho_2^{in}(d\zeta_2)d\tau
\\
&-\int_{\bR^d}m(t,Z_2(t,z_2),\zeta_2)\rho_2^{in}(d\zeta_2)
\ea
$$
for each $t\ge 0$ and $z_1,z_2\in\bR^d$. In the second equality above, we have used the identity
\be\lb{IntChVar}
\int_{\bR^d}\phi(Z_j(\tau,\zeta_j))\rho_j^{in}(d\zeta_j)=\int_{\bR^d}\phi(\zeta_j)\rho_j(\tau,d\zeta_j)
\ee
for all bounded $\phi$, Borel measurable on $\bR^d$, since $\rho_j(\tau,\cdot)=Z_j(\tau,\cdot)\#\rho_j^{in}$ --- see the 
definition of the image of a probability measure under a Borel map following (\ref{MFEq}), that is equivalent to (\ref{IntChVar}) with $\phi=\indc_A$.

Pick $\pi^{in}$ to be any probability measure on $\bR^d\times\bR^d$ with $\rho_1^{in}$ and $\rho_2^{in}$ as marginals: one has, for each 
$t\ge 0$ and $z_1,z_2\in\bR^d$
$$
\ba
K\rho_1(t,Z_1(t,z_1))-&K\rho_2(t,Z_2(t,z_2))
\\
=
\int_0^t\iint_{\bR^d\times\bR^d}&(r(t,Z_1(t,z_1),\tau,Z_1(\tau,\zeta_1))
\\
&-r(t,Z_2(t,z_2),\tau,Z_2(\tau,\zeta_2)))\pi^{in}(d\zeta_1,d\zeta_2)
\\
+
\iint_{\bR^d\times\bR^d}&(m(t,Z_1(t,z_1),\zeta_1)-m(t,Z_2(t,z_2),\zeta_2))\pi^{in}(d\zeta_1,d\zeta_2)\,.
\ea
$$
Hence, as a consequence of (\ref{Lip}), for each $t\in[0,T]$ and $z_1,z_2\in\bR^d$
$$
\ba
|K&\rho_1(t,Z_1(t,z_1))-K\rho_2(t,Z_2(t,z_2))|
\\
&\le\Lip_T(k)\left(t1\wedge|Z_1(t,z_1)-Z_2(t,z_2)|+\int_0^t\cD(\tau)d\tau+\cD(0)\right)\,,
\ea
$$
with the notation
$$
\cD(\tau):=\iint_{\bR^d\times\bR^d}1\wedge|Z_1(\tau,\zeta_1)-Z_2(\tau,\zeta_2)|\pi^{in}(d\zeta_1,d\zeta_2)\,.
$$

Therefore, for each $t\in[0,T]$ and each $z_1,z_2\in\bR^d$, one has
$$
\ba
{}&|Z_1(t,z_1)-Z_2(t,z_2)|
\\
&\le|z_1-z_2|+\int_0^t|K\rho_1(s,Z_1(s,z_1))-K\rho_2(s,Z_2(s,z_2))|ds
\\
&\le|z_1-z_2|+\Lip_T(k)t\cD(0)
\\
&+\!\Lip_T(k)\left(\int_0^ts1\wedge|Z_1(s,z_1)\!-\!Z_2(s,z_2)|ds\!+\!\!\int_0^t\int_0^s\cD(\tau)d\tau ds\right)\,.
\ea
$$
Integrating with the measure $\pi^{in}(dz_1dz_2)$ both sides of the resulting inequality for $1\wedge|Z_1(t,z_1)-Z_2(t,z_2)|$, we arrive at
$$
\ba
\cD(t)&\le(1+t\Lip_T(k))\cD(0)
\\
&+\Lip_T(k)\left(\int_0^ts\cD(s)ds+\int_0^t\int_0^s\cD(\tau)d\tau ds\right)
\\
&=(1+t\Lip_T(k))\cD(0)
\\
&+\Lip_T(k)\left(\int_0^ts\cD(s)ds+\int_0^t(t-s)\cD(s)ds\right)
\\
&=(1+t\Lip_T(k))\cD(0)+\Lip_T(k)t\int_0^t\cD(s)ds\,.
\ea
$$
This implies the estimate
$$
\cD(t)\le\cD(0)\Psi(t)\,,\quad t\ge 0\,,
$$
where
$$
\Psi(t)=(1+t\Lip_t(k))e^{t^2\Lip_t(k)}\,,\quad t\ge 0\,.
$$

Now
$$
\cD(t)=\iint_{\bR^d\times\bR^d}1\wedge|\zeta_1-\zeta_2|\pi(t;d\zeta_1d\zeta_2)
$$
where
$$
\pi(t,\cdot):=(Z_1(t,\cdot),Z_2(t,\cdot))\#\pi^{in}
$$
is the image under the map $(z_1,z_2)\mapsto(Z_1(t,z_1),Z_2(t,z_2))$ of the measure $\pi^{in}$ . Obviously 
$$
\pi^{in}\in\Pi(\rho_1^{in},\rho_2^{in})\Rightarrow\pi(t,\cdot)\in\Pi(\rho_1(t,\cdot),\rho_2(t,\cdot))
$$
for each $t\ge 0$, so that
$$
\Dist_{MKR}(\rho_1(t,\cdot),\rho_2(t,\cdot))\le\Psi(t)\cD(0)\,.
$$
Minimizing the right-hand side of this inequality as $\pi^{in}$ runs through $\Pi(\rho_1^{in},\rho_2^{in})$ leads to the desired estimate.
\end{proof}

%%%%%%%%%%%%%%%%%%%%%%%%%%%%%%%%%%%%%%%%%%%%%%%%%%%%%%%%%%%%%%%%%%%%%%%%%%
\section{Application to the regularized Vlasov-Maxwell dynamics}
%%%%%%%%%%%%%%%%%%%%%%%%%%%%%%%%%%%%%%%%%%%%%%%%%%%%%%%%%%%%%%%%%%%%%%%%%%

Let $\eps>0$ be a fixed regularization parameter, and consider the regularized Vlasov-Maxwell system (\ref{VMScalReg}). Henceforth we
assume that $f^{in}$ is a probability density on $\bR^3\times\bR^3$. Applying the method of characteristics to the transport equation governing 
$f_\eps$ shows that
\be\lb{MethChareps}
f_\eps(t,X_\eps(t,x,v;f^{in}),\Xi_\eps(t,x,v;f^{in}))=f^{in}(x,v)
\ee
where $t\mapsto(X_\eps,\Xi_\eps)(t,\cdot,\cdot;f^{in})$ is the solution of
\be\lb{CharFieldeps}
\left\{
\ba
\dot{X}_\eps&=v(\Xi_\eps)
\\
\dot{\Xi}_\eps&=E_\eps(t,X_\eps)+v(\Xi_\eps)\wedge B_\eps(t,X_\eps)
\\
&=F_\eps[f_\eps(t,\cdot,\cdot)](X_\eps,\Xi_\eps)\,,
\ea
\right.
\ee
with the notation
\be\lb{DefFeps}
\ba
F_\eps[f]:=&-\int_{\bR^3}\d_t\grad_xY_\eps(t,\cdot)\star_xG\star_xf(0,\cdot,\eta)d\eta
\\
&-\int_{\bR^3}(\grad_x+v(\eta)\d_t)Y_\eps\star_{t,x}f(\cdot,\cdot,\eta)d\eta
\\
&-\int_{\bR^3}v(\xi)\wedge(v(\eta)\wedge\grad_xY_\eps\star_{t,x}f(\cdot,\cdot,\eta))d\eta\,.
\ea
\ee

Since $\Div_{x,\xi}(v(\xi),E_\eps(t,x)+v(\xi)\wedge B_\eps(t,x))=0$, the flow $(X_\eps,\Xi_\eps)$ leaves the Lebesgue measure $dxd\xi$ invariant so 
that (\ref{MethChareps}) can be recast as
\be\lb{MethChareps2}
f_\eps(t,\cdot,\cdot)dxd\xi=(X_\eps,\Xi_\eps)(t,\cdot,\cdot;f^{in})\#f^{in}dxd\xi\,.
\ee

According to (\ref{DefFeps}), the vector field $(v(\xi),E_\eps(t,x)+v(\xi)\wedge B_\eps(t,x))$ can be expressed by an integral operator with kernel of 
the form (\ref{Form-k}) satisfying the causality and Lipschitz conditions (\ref{Causal}) and (\ref{Lip}):
\be\lb{DefKeps}
\ba
(v(\xi),E_\eps(t,x)&+v(\xi)\wedge B_\eps(t,x))=K_\eps((X_\eps,\Xi_\eps)(t,\cdot,\cdot;f^{in})\# f^{in})(t,x,\xi)
\\
&=\int_0^t\iint_{\bR^3\times\bR^3}r_\eps(t,x,\xi,\tau,y,\eta)f_\eps(\tau,y,\eta)dyd\eta d\tau
\\
&+\iint_{\bR^3\times\bR^3}m_\eps(t,x,\xi,y,\eta)f^{in}(y,\eta)dyd\eta\,.
\ea
\ee
In this formula, $r_\eps=(r^1_\eps,r^2_\eps)$ and $m_\eps=(m^1_\eps,m^2_\eps)$, where $r^1_\eps$, $r^2_\eps$, $m^1_\eps$ and $m^2_\eps$ 
are given by 
\be\lb{Def-rmeps}
\ba
r^1_\eps(t,x,\xi,s,y,\eta)&=0\,,
\\
m^1_\eps(t,x,\xi,y,\eta)&=\indc_{t\ge 0}v(\xi)\,,
\\
r^2_\eps(t,x,\xi,s,y,\eta)&=-(\grad_x+v(\eta)\d_t)Y_\eps(t-s,x-y)
\\
&-v(\xi)\wedge(v(\eta)\wedge\grad_xY_\eps(t-s,x-y))\,,
\\
m^2_\eps(t,x,\xi,y,\eta)&=-\d_t\grad_xY_\eps\star_xG(t,x-y)\,.
\ea
\ee

Therefore the characteristic flow $Z_\eps:=(X_\eps,\Xi_\eps)$ of the regularized Vlasov-Maxwell system (\ref{VMScalReg}) satisfies a mean-field
integro-differential system of the form (\ref{MFEq}), where the integral operator $K_\eps$ is defined by (\ref{DefKeps})-(\ref{Def-rmeps}). That the
integral kernel of $K_\eps$ satisfies (\ref{Causal}) is obvious since $Y_\eps(t,x)=0$ for all $x\in\bR^3$ if $t<0$. That it also satisfies the Lipschitz
condition (\ref{Lip}) is equally obvious in view of (\ref{PtesYeps}). Henceforth, we denote 
\be\lb{DefLTeps}
L(T,\eps)=\Lip_T(k_\eps)
\ee
where $k_\eps$ is the integral kernel of the operator $K_\eps$ defined above.

Bringing together the results obtained in the previous sections, we finally obtain the regularized Vlasov-Maxwell system (\ref{VMReg})  ---
or equivalently (\ref{VMScalReg}) as the mean field limit of the $N$-particle system (\ref{DynSystVMReg}) as $N\to+\infty$.

\begin{Thm}\lb{T-MFLim}
Let $\eps>0$ and $f^{in}\in L^\infty(\bR^3\times\bR^3)$ be such that 
$$
f^{in}\ge 0\hbox{ a.e. on }\bR^3\times\bR^3\,,\quad\Supp(f^{in})\hbox{ is compact,}
$$
and
$$
\iint_{\bR^3\times\bR^3}f^{in}(x,\xi)dxd\xi=1\,.
$$
(In other words, $f^{in}$ is an essentially bounded probability distribution on $\bR^3\times\bR^3$.)

Let $(f_\eps,u_\eps)$ be the solution of (\ref{VMScalReg}) with initial data (\ref{CondInVMScalReg}) --- or, equivalently $(f_\eps,E_\eps,B_\eps)$ 
the solution of (\ref{VMReg}) with initial data (\ref{CondInReg}).

On the other hand, let $(x^{in}_{i,N},\xi^{in}_{i,N})_{1\le i\le N}$ be a sequence (indexed by $N$) of $2N$-tuples in $\bR^3$ such that the initial
empirical measure of the $N$-particle system
$$
f^{in}_N:=\frac1N\sum_{i=1}^N\de_{x^{in}_{i,N}}\otimes\de_{\xi^{in}_{i,N}}\to f^{in}dxd\xi\hbox{ weakly }
$$
as $N\to+\infty$. Then

\smallskip
\noindent
a) for each $N\ge 1$, the system (\ref{DynSystVMReg}) has a unique solution defined for all $t\ge 0$ and denoted 
$t\mapsto(x_{i,N}(t),\xi_{i,N}(t))_{1\le i\le N}\,$.

\smallskip
Denote
$$
f_{N,\eps}(t,\cdot,\cdot):=\frac1N\sum_{i=1}^N\de_{x_{i,N}(t)}\otimes\de_{\xi_{i,N}(t)}
$$
the $N$-particle empirical measure  at time $t$ for each $t\ge 0$. Then

\noindent
b) for each $T>0$,
$$
f_{N,\eps}(t,\cdot,\cdot)\to f_\eps(t,x,\xi)dxd\xi\hbox{ weakly in }\cP(\bR^3\times\bR^3)
$$
uniformly in $t\in[0,T]$ as $N\to+\infty$; more precisely
$$
\ba
\sup_{t\in[0,T]}\Dist_{MKR}\left(f_\eps(t,\cdot,\cdot),f_{N,\eps}(t,\cdot,\cdot)\right)
\le
C(T,\eps)\Dist_{MKR}\left(f^{in},f^{in}_N\right)
\ea
$$
where $C(T,\eps)=(1+TL(T,\eps))e^{T^2L(T,\eps)}$ with $L(T,\eps)$ as in (\ref{DefLTeps}).
\end{Thm}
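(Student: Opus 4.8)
The idea is to show that both the continuous solution $f_\eps$ and the discrete empirical measures $f_{N,\eps}$ are \emph{push-forwards} of a common mean-field flow of the type studied in Proposition \ref{P-ExistUniqMFEq}, so that the Dobrushin estimate of Proposition \ref{P-DobrushIneq} applies directly and yields part b). Part a) --- existence and uniqueness of the $N$-particle trajectories --- will follow by observing that (\ref{DynSystVMReg}) is exactly the mean-field equation (\ref{MFEq}) specialized to the discrete initial measure $f^{in}_N$.

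First I would identify the dimension and the kernel: take $d=6$ with phase-space variable $z=(x,\xi)\in\bR^3\times\bR^3$, and let $K_\eps$ be the integral operator with kernel $k_\eps=(m_\eps,r_\eps)$ defined by (\ref{DefKeps})--(\ref{Def-rmeps}). The discussion preceding (\ref{DefLTeps}) already records that $k_\eps$ has the form (\ref{Form-k}) and satisfies the causality condition (\ref{Causal}) and the Lipschitz condition (\ref{Lip}) with constant $L(T,\eps)$, the latter because (\ref{PtesYeps}) gives $Y_\eps\in C^\infty$ with bounded derivatives of all orders on $[0,T]\times\bR^3$ (and $\grad_x Y_\eps\star_x G$ likewise bounded). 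I would then invoke Proposition \ref{P-ExistUniqMFEq} twice, with $\rho^{in}=f^{in}\,dxd\xi$ and with $\rho^{in}=f^{in}_N$, to obtain unique global flows $Z_\eps(\cdot,\cdot;f^{in})$ and $Z_\eps(\cdot,\cdot;f^{in}_N)$.

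The key identifications are the following. By (\ref{MethChareps2}) the continuous solution satisfies $f_\eps(t,\cdot,\cdot)\,dxd\xi=Z_\eps(t,\cdot;f^{in})\#(f^{in}dxd\xi)$, and by (\ref{DefKeps}) the characteristic field $Z_\eps=(X_\eps,\Xi_\eps)$ solves precisely the mean-field system (\ref{MFEq}) for the operator $K_\eps$ with $\rho^{in}=f^{in}dxd\xi$. On the discrete side, since the initial empirical measure $f^{in}_N=\frac1N\sum_i\de_{(x^{in}_{i,N},\xi^{in}_{i,N})}$ is itself a Borel probability measure on $\bR^6$, Proposition \ref{P-ExistUniqMFEq} produces a unique flow $Z_\eps(\cdot,\cdot;f^{in}_N)$; evaluating it at the $N$ atoms $(x^{in}_{i,N},\xi^{in}_{i,N})$ and writing $(x_{i,N}(t),\xi_{i,N}(t)):=Z_\eps(t,(x^{in}_{i,N},\xi^{in}_{i,N});f^{in}_N)$, the defining integral equation (\ref{IntEq}) collapses --- because the integrals against $f^{in}_N$ become finite sums $\frac1N\sum_j$ --- to exactly the delay-differential system (\ref{DynSystVMReg}), giving part a). Moreover $f_{N,\eps}(t,\cdot,\cdot)=Z_\eps(t,\cdot;f^{in}_N)\#f^{in}_N$ by construction, so the empirical measure is the push-forward of the discrete flow.

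With both measures exhibited as $\rho_1(t,\cdot)=Z_\eps(t,\cdot;f^{in})\#(f^{in}dxd\xi)$ and $\rho_2(t,\cdot)=Z_\eps(t,\cdot;f^{in}_N)\#f^{in}_N$ in the notation of Proposition \ref{P-DobrushIneq}, that proposition gives immediately
$$
\Dist_{MKR}\!\left(f_\eps(t,\cdot,\cdot),f_{N,\eps}(t,\cdot,\cdot)\right)\le(1+t\Lip_t(k_\eps))e^{t^2\Lip_t(k_\eps)}\Dist_{MKR}\!\left(f^{in},f^{in}_N\right),
$$
and since $t\mapsto\Lip_t(k_\eps)$ is nondecreasing the prefactor is bounded on $[0,T]$ by $C(T,\eps)=(1+TL(T,\eps))e^{T^2L(T,\eps)}$, uniformly in $t\in[0,T]$. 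Taking the supremum over $t\in[0,T]$ yields the stated estimate, and the weak convergence $f_{N,\eps}(t,\cdot,\cdot)\to f_\eps(t,\cdot,\cdot)$ uniformly on $[0,T]$ follows since $\Dist_{MKR}(f^{in},f^{in}_N)\to0$ metrizes the assumed weak convergence $f^{in}_N\to f^{in}dxd\xi$ on the space of probability measures. The only point requiring genuine care --- and the step I expect to be the main obstacle --- is verifying rigorously that the finite-sum collapse of (\ref{IntEq}) reproduces (\ref{DynSystVMReg}) atom by atom, i.e.\ that the push-forward of an atomic measure under the mean-field flow is consistent with the coupled particle dynamics; this is where one must check that the self-interaction term $j=i$ is correctly included (matching the $\frac1N\sum_{j=1}^N$ in (\ref{DynSystVMReg}) rather than the $\frac1{N-1}\sum_{j\ne i}$ of (\ref{DynSystVM})), which is exactly why the self-force was retained in the regularized model.
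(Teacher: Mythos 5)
Your proposal is correct and follows essentially the same route as the paper: identify (\ref{DynSystVMReg}) and the regularized Vlasov--Maxwell characteristics as two instances of the mean-field equation (\ref{MFEq}) with the kernel $k_\eps$ of (\ref{DefKeps})--(\ref{Def-rmeps}), obtain part a) from Proposition \ref{P-ExistUniqMFEq} applied to the atomic initial measure $f^{in}_N$ (the finite-sum collapse, including the self-interaction term $j=i$, is precisely the paper's ``straightforward computation''), and obtain part b) from Proposition \ref{P-DobrushIneq} together with (\ref{MethChareps2}) and the fact that $\Dist_{MKR}$ metrizes weak convergence.
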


\begin{proof}
As we have explained before stating Theorem \ref{T-MFLim}, the regularized Vlasov-Maxwell system (\ref{VMScalReg}) with initial data 
(\ref{CondInVMScalReg}), or equivalently (\ref{VMReg}) with initial data (\ref{CondInReg}) is a special case of (\ref{MFEq}) with integral
operator $K_\eps$ defined by (\ref{DefKeps})-(\ref{Def-rmeps}).

By Proposition \ref{P-ExistUniqMFEq}, the system (\ref{MFEq}) with integral operator $K_\eps$ has a unique solution 
$$
(X_\eps,\Xi_\eps)\equiv(X_\eps,\Xi_\eps)(t,x,\xi;f^{in})
$$
for any initial  Borel probability measure $f^{in}$ on $\bR^3\times\bR^3$.

On the other hand, if the map $t\mapsto(x_{i,N}(t),\xi_{i,N}(t))_{1\le i\le N}$ is $C^1$ on $\bR_+$, a straightforward computation shows that it is a 
solution of (\ref{DynSystVMReg}) with initial data
$$
(x_{i,N}(0),\xi_{i,N}(0))_{1\le i\le N}=(x_{i,N}^{in},\xi_{i,N}^{in})_{1\le i\le N}
$$
if and only if 
$$
x_{i,N}(t)=X_\eps(t,x_{i,N}^{in},\xi_{i,N}^{in};f^{in}_N)\,,\quad\xi_{i,N}(t)=\Xi_\eps(t,x_{i,N}^{in},\xi_{i,N}^{in};f^{in}_N)\,.
$$
This is in turn equivalent to the fact that the empirical measure
$$
f_{N,\eps}(t,\cdot,\cdot)=\frac1N\sum_{i=1}^N\de_{x_{i,N}(t)}\otimes\de_{\xi_{i,N}(t)}
$$
satisfies
$$
f_{N,\eps}(t,\cdot,\cdot)=(X_\eps,\Xi_\eps)(t,\cdot,\cdot;f^{in}_N)\#f^{in}_N\,.
$$
Therefore the existence and uniqueness result in Proposition \ref{P-ExistUniqMFEq} translates into statement a) in the theorem.

Comparing this last equality with (\ref{MethChareps2}) and using the Dobrushin inequality in Proposition \ref{P-DobrushIneq}, we
arrive at b). (We recall that the distance $\Dist_{MKR}$ metrizes the topology of weak convergence on the set $\cP(\bR^3\times\bR^3)$
of probability measures on $\bR^3\times\bR^3$.)
\end{proof}

Theorem \ref{T-MFLim} can be viewed as a quantitative statement about the continuity in the weak topology of probability measures of the 
solution of the regularized Vlasov-Maxwell system (\ref{VMReg}) in terms of its initial data. Its corollary stated below explains how this information 
can be translated into uniform convergence of the $N$-particle regularized electromagnetic field to the one created by the solution of (\ref{VMReg}).

Let $(E_\eps,B_\eps)$ be the electromagnetic field associated to the solution of (\ref{VMReg}) with initial data $f^{in}$ and let 
$(E_{N,\eps},B_{N,\eps})$ be the solution of the regularized Maxwell system 
\be\lb{MaxReg}
\left\{
\begin{array}{l}
\Div_xB_{N,\eps}=0\,,
\\
\d_tB_{N,\eps}+\Rot_xE_{N,\eps}=0\,,
\\
\Div_xE_{N,\eps}=\chi_\eps\star_x\chi_\eps\star_x\rho_{f_{N,\eps}}\,,
\\
\d_tE_{N,\eps}-\Rot_xB_{N,\eps}=-\chi_\eps\star_x\chi_\eps\star_xj_{f_{N,\eps}}\,,
\end{array}
\right.
\ee
with initial data
\be\lb{CondInMaxReg}
E_{N,\eps}\rstr_{t=0}=-\grad_x\chi_\eps\star_x\chi_\eps\star_xG\star_x\rho_{f^{in}_N}\,,\quad B_\eps\rstr_{t=0}=0\,,
\ee
In other words, $(E_{N,\eps},B_{N,\eps})$ is the regularized electromagnetic field created by the $N$-particle system.

\begin{Cor}\lb{C-MFLim}
Under the same assumptions and with the same notations as in Theorem \ref{T-MFLim}

\noindent
a) for each $T>0$, one has
$$
\Dist_{MKR}\left(\rho_{f_\eps}(t,\cdot),\rho_{f_{N,\eps}}(t,\cdot)\right)\le C(T,\eps)\Dist_{MKR}\left(f^{in},f^{in}_N\right)
$$
and
$$
\|j_{f_\eps}(t,\cdot)-j_{f_{N,\eps}}(t,\cdot)\|_{W^{-1,1}(\bR^3)}\le 4C(T,\eps)\Dist_{MKR}\left(f^{in},f^{in}_N\right)
$$
for all $t\in[0,T]$ and $N\ge 1$ --- so that in particular 
$$
\frac1N\sum_{i=1}^N\de_{x_{i,N}(t)}\to\rho_{f_\eps}(t,\cdot)
	\quad\hbox{ and }\quad\frac1N\sum_{i=1}^Nv(\xi_{i,N}(t))\de_{x_{i,N}(t)}\to j_{f_\eps}(t,\cdot)
$$
uniformly in $t\in[0,T]$ as $N\to+\infty$, where the first convergence holds in the weak topology of $\cP(\bR^3)$, while the second holds in 
the weak topology of bounded Radon measures on $\bR^3$;

b) for each $T>0$, $(E_{N,\eps},B_{N,\eps})$ converges to $(E_\eps,B_\eps)$ uniformly in $(t,x)\in[0,T]\times\bR^3$ with the following estimates
$$
\|B_\eps-B_{N,\eps}\|_{L^\infty([0,T]\times\bR^3)}\le C'(T,\eps)\Dist_{MKR}(f^{in},f^{in}_N)
$$
and
$$
\|E_\eps-E_{N,\eps}\|_{L^\infty([0,T]\times\bR^3)}\le C''(T,\eps)\Dist_{MKR}(f^{in},f^{in}_N)\,,
$$
where
$$
C'(T,\eps)=4TC(T,\eps)\|\grad_xY_\eps\|_{L^\infty([0,T];W^{1,\infty}(\bR^3))}
$$
and
$$
\ba
C''(T,\eps)=4TC(T,\eps)\|\grad_{t,x}Y_\eps\|_{L^\infty([0,T];W^{1,\infty}(\bR^3))}&
\\
+\|d_t\grad_xY_\eps\star_xG\|_{L^\infty([0,T];W^{1,\infty}(\bR^3))}&\,,
\ea
$$
and where $C(T,\eps)$ is as in Theorem \ref{T-MFLim}.
\end{Cor}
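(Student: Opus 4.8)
The plan is to reduce every estimate to a single transfer principle: if $\Phi:\bR^3\times\bR^3\to\bR^n$ is bounded and Lipschitz, then since $|\Phi(a)-\Phi(b)|\le\min(\mathrm{Lip}(\Phi)|a-b|,2\|\Phi\|_\infty)$ it is Lipschitz for the truncated metric $(a,b)\mapsto 1\wedge|a-b|$ with constant $\Lambda(\Phi):=\max(\mathrm{Lip}(\Phi),2\|\Phi\|_\infty)$, whence for any two probability measures $\mu,\nu$ the coupling definition of $\Dist_{MKR}$ yields
$$
\left|\int\Phi\,d\mu-\int\Phi\,d\nu\right|\le\Lambda(\Phi)\,\Dist_{MKR}(\mu,\nu).
$$
Applying this with $\mu=f_\eps(t,\cdot,\cdot)$, $\nu=f_{N,\eps}(t,\cdot,\cdot)$ and inserting the bound of Theorem \ref{T-MFLim} turns each functional of $f_\eps-f_{N,\eps}$ into a multiple of $\Dist_{MKR}(f^{in},f^{in}_N)$. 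Throughout I use that $v$ is bounded by $1$ and $1$-Lipschitz (as $\grad_\xi v=(1+|\xi|^2)^{-1/2}(I-v\otimes v)$ has norm $\le1$), and that by (\ref{PtesYeps}) $Y_\eps$ and all its derivatives are bounded on $[0,T]\times\bR^3$.

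For part a), the density $\rho_{f_\eps}(t,\cdot)$ is the first marginal of $f_\eps(t,\cdot,\cdot)$, i.e. its image under the $1$-Lipschitz projection $(x,\xi)\mapsto x$; pushing an optimal coupling of $f_\eps(t)$ and $f_{N,\eps}(t)$ forward under this projection and using $1\wedge|x-y|\le1\wedge|(x,\xi)-(y,\eta)|$ gives $\Dist_{MKR}(\rho_{f_\eps}(t),\rho_{f_{N,\eps}}(t))\le\Dist_{MKR}(f_\eps(t),f_{N,\eps}(t))$, and Theorem \ref{T-MFLim} finishes the first inequality. For the current I test $j_{f_\eps}(t,\cdot)-j_{f_{N,\eps}}(t,\cdot)$ against a field $\phi$ with $\|\phi\|_{W^{1,\infty}}\le1$: since
$$
\langle j_{f_\eps}-j_{f_{N,\eps}},\phi\rangle=\iint v(\xi)\cdot\phi(x)\,(f_\eps-f_{N,\eps})(t,dx\,d\xi),
$$
the integrand $\Phi(x,\xi)=v(\xi)\cdot\phi(x)$ satisfies $\|\Phi\|_\infty\le1$ and $\mathrm{Lip}(\Phi)\le\|v\|_\infty\mathrm{Lip}(\phi)+\mathrm{Lip}(v)\|\phi\|_\infty\le2$, so $\Lambda(\Phi)\le4$; the transfer principle and Theorem \ref{T-MFLim} then give the $W^{-1,1}$ bound with constant $4C(T,\eps)$ after taking the supremum over $\phi$.

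For part b), I first record that by linearity of the regularized Maxwell system and uniqueness of its solution, $(E_{N,\eps},B_{N,\eps})$ is given by exactly the same potential representation as $(E_\eps,B_\eps)$ — namely $u=Y_\eps\star_{t,x}(\indc_{t>0}f)$, $A=\int v(\eta)u\,d\eta$, $\phi=\chi_\eps\star\chi_\eps\star\phi_0+\int u\,d\eta$, $B=\Rot_xA$, $E=-\d_tA-\grad_x\phi$ — with $f_{N,\eps}$ in place of $f_\eps$ and $f^{in}_N$ in place of $f^{in}$; since $Y_\eps$ is smooth these formulas make sense for empirical measures. Differentiating the convolutions, $B_\eps(t,x)-B_{N,\eps}(t,x)$ is the integral over $s\in[0,t]$ of the integral against $(f_\eps-f_{N,\eps})(s,\cdot)$ of the kernel $(y,\eta)\mapsto\grad_xY_\eps(t-s,x-y)\times v(\eta)$, which is bounded by $\|\grad_xY_\eps\|_\infty$ and Lipschitz with constant controlled by $\|\grad_xY_\eps\|_{W^{1,\infty}}$ (in $y$) and $\|\grad_xY_\eps\|_\infty\,\mathrm{Lip}(v)$ (in $\eta$); applying the transfer principle for each $s$, integrating over $[0,t]$ and inserting Theorem \ref{T-MFLim} produces $C'(T,\eps)=4TC(T,\eps)\|\grad_xY_\eps\|_{L^\infty([0,T];W^{1,\infty}(\bR^3))}$. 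The field $E_\eps-E_{N,\eps}$ is handled identically, except that $\grad_x\phi$ carries the initial term $\grad_x(\chi_\eps\star\chi_\eps\star\phi_0)=\d_t\grad_xY_\eps\star_xG\star_x\rho_{f^{in}}$ by (\ref{f->phi0}): the dynamic part, built from $\grad_{t,x}Y_\eps$ convolved with $f_\eps-f_{N,\eps}$, contributes $4TC(T,\eps)\|\grad_{t,x}Y_\eps\|_{L^\infty([0,T];W^{1,\infty})}\Dist_{MKR}(f^{in},f^{in}_N)$, while the initial part, an integral against $f^{in}-f^{in}_N$ of the $\eta$-independent kernel $(y,\eta)\mapsto-\d_t\grad_xY_\eps\star_xG(t,x-y)$, contributes $\|\d_t\grad_xY_\eps\star_xG\|_{L^\infty([0,T];W^{1,\infty})}\Dist_{MKR}(f^{in},f^{in}_N)$; their sum is exactly $C''(T,\eps)$.

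The computations are routine once the representation is in place, so the only point requiring genuine care is the bookkeeping of part b): assembling the correct bounded–Lipschitz kernels from the potential formulas (tracking the cross-product structure of $B=\Rot_xA$, the $-\d_tA-\grad_x\phi$ structure of $E$, and the derivative $\d_t\grad_x$ falling on $Y_\eps$), and, for $E$, separating the retarded contribution of $f_\eps-f_{N,\eps}$ from the initial contribution of $f^{in}-f^{in}_N$. The factors $4$ and the $W^{1,\infty}$ norms are comfortable upper bounds absorbing the Lipschitz constants of $v$ and the cross products, so no sharp tracking of constants is required.
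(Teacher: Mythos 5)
Your proof is correct, and its skeleton coincides with the paper's: part a) is reduced by coupling arguments to the stability estimate of Theorem \ref{T-MFLim}, and part b) is deduced from the retarded-potential representation of the fields built on $Y_\eps$. Two steps are executed differently, both in a slightly more elementary way than in the paper. First, for the inequality $\Dist_{MKR}(\rho_{f_\eps}(t,\cdot),\rho_{f_{N,\eps}}(t,\cdot))\le\Dist_{MKR}(f_\eps(t,\cdot),f_{N,\eps}(t,\cdot))$ the paper maximizes over Lipschitz test functions and invokes the Kantorovich duality theorem (with a citation to Villani), whereas you push a coupling of $f_\eps(t,\cdot)$ and $f_{N,\eps}(t,\cdot)$ forward under the projection $(x,\xi)\mapsto x$ and use $1\wedge|x-y|\le 1\wedge|(x,\xi)-(y,\eta)|$; this is self-contained and avoids duality altogether. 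Second, in part b) the paper deliberately factors the field estimates through the $W^{-1,1}$ bound on $j_{f_\eps}-j_{f_{N,\eps}}$ proved in a), pairing the kernel $y\mapsto\grad_xY_\eps(t-s,x-y)$ with the current difference, whereas you apply your bounded-Lipschitz transfer principle once, directly on phase space, to the kernel $(y,\eta)\mapsto\grad_xY_\eps(t-s,x-y)\wedge v(\eta)$ and invoke Theorem \ref{T-MFLim} b) again; the two computations are equivalent, yours being marginally more unified, the paper's making visible why the $W^{-1,1}$ statement in a) is recorded at all. One shared caveat on constants: for the initial-data term of $E_\eps-E_{N,\eps}$, the transfer principle (like the paper's pairing argument) yields the factor $\max(\Lip(\phi),2\|\phi\|_\infty)$, which is only bounded by $2\|\phi\|_{W^{1,\infty}}$, so the coefficient $1$ in front of $\|\d_t\grad_xY_\eps\star_xG\|_{L^\infty([0,T];W^{1,\infty}(\bR^3))}$ in $C''(T,\eps)$ is not literally attained; this factor-of-two looseness is present in the paper's own (sketched) treatment of the electric field as well and is immaterial to the statement, but you should not claim the initial part contributes exactly that norm.
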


\begin{proof}
For the first statement in a), pick any bounded and Lipschitz-continuous function $\phi$ on $\bR^3$ and observe that
$$
\ba
\left|\int_{\bR^3}(\rho_{f_\eps}(t,z)-\rho_{f_{N,\eps}}(t,z))\phi(z)dx\right|
\\
=\left|\iint_{\bR^3\times\bR^3}(\phi(x)-\phi(y))\pi(t,dxd\xi dyd\eta)\right|
\\
\le 2\|\phi\|_{W^{1,\infty}}\iint_{\bR^3\times\bR^3\times\bR^3\times\bR^3}1\wedge|x-y|\pi(t,dxd\xi dyd\eta)
\ea
$$
where $\pi(t,\cdot)$ is any probability measure in $\Pi(f_\eps(t,\cdot),f_{N,\eps}(t,\cdot))$. Minimizing the r.h.s. in $\pi(t,\cdot)$ while maximizing
the l.h.s. in $\phi$ such that $\Lip(\phi)\le 1$ and applying the Kantorovich duality theorem (see formula (5.11) on p. 60 in \cite{VillaniTOT}, or
Theorem 1.3 on p. 19 in \cite{VillaniTOTold} and Remark 7.5 (i) on p. 207 in that same reference) leads to 
$$
\Dist_{MKR}(\rho_{f_\eps}(t,\cdot),\rho_{f_{N,\eps}}(t,\cdot))\le\Dist_{MKR}(f_\eps(t,\cdot),f_{N,\eps}(t,\cdot))\,,
$$ 
and one concludes with Theorem \ref{T-MFLim} b).

For the second statement, proceed in the same way, replacing the test function $\phi$ with $\psi(x)\cdot v(\xi)$, assuming that 
$\psi\in W^{1,\infty}(\bR^3;\bR^3)$. Thus
$$
\ba
\left|\int_{\bR^3}(j_{f_\eps}(t,z)-j_{f_{N,\eps}}(t,z))\cdot\psi(z)dx\right|
\\
=\left|\iint_{\bR^3\times\bR^3}(\psi(x)\cdot v(\xi)-\psi(y)\cdot v(\eta))\pi(t,dxd\xi dyd\eta)\right|
\\
\le(2\|\phi\|_{W^{1,\infty}}+2\|\psi\|_{L^\infty})
\\
\times\iint_{\bR^3\times\bR^3\times\bR^3\times\bR^3}(1\wedge|x-y|+1\wedge|\xi-\eta|)\pi(t,dxd\xi dyd\eta)\,.
\ea
$$
Minimizing the r.h.s. in $\pi(t,\cdot)$ while maximizing the l.h.s. in $\psi$ such that $\|\psi\|_{W^{1,\infty}(\bR^3)}\le 1$ leads to 
$$
\|j_{f_\eps}(t,\cdot)-j_{f_{N,\eps}}(t,\cdot)\|_{W^{-1,1}(\bR^3)}\le 4\Dist_{MKR}(f_\eps(t,\cdot),f_{N,\eps}(t,\cdot))
$$ 
and one concludes again with Theorem \ref{T-MFLim} b).

Now for b). Observe that
$$
B_\eps(t,x)-B_{N,\eps}(t,x)=\int_0^t\int_{\bR^3}\grad_xY_\eps(t-s,x-y)\wedge(j_{f_\eps}(s,y)-j_{f_{N,\eps}}(s,y))dy
$$
so that, for all $x\in\bR^3$, one has
$$
\ba
|B_\eps(t,x)-B_{N,\eps}(t,x)|\le&t\sup_{0\le s\le t}\|\grad_xY_\eps(s,\cdot)\|_{W^{1,\infty}(\bR^3)}
\\
&\times\sup_{0\le s\le t}\|j_{f_\eps}(t,\cdot)-j_{f_{N,\eps}}(t,\cdot)\|_{W^{-1,1}(\bR^3)}
\ea
$$
and one concludes by a). The estimate for the electric field is obtained in a similar way, from the formula
$$
\ba
E_\eps(t,x)&-E_{N,\eps}(t,x)
\\
&=-\int_0^t\int_{\bR^3}\d_tY_\eps(t-s,x-y)(j_{f_\eps}(s,y)-j_{f_{N,\eps}}(s,y))dy
\\
&-\int_0^t\int_{\bR^3}\grad_xY_\eps(t-s,x-y)(\rho_{f_\eps}(s,y)-\rho_{f_{N,\eps}}(s,y))dy
\\
&-\int_{\bR^3}(\grad_x\d_tY_\eps(t,\cdot)\star_xG)(x-y)(\rho_{f^{in}}(y)-\rho_{f^{in}_{N}}(y))dy\,.
\ea
$$
\end{proof}

%%%%%%%%%%%%%%%%%%%%%%%%%%%%%%%%%%%%%%%%%%%%%%%%%%%%%%%%%%%%%%%%%%%%%%%%%%

\section{Final remarks and conclusion}

%%%%%%%%%%%%%%%%%%%%%%%%%%%%%%%%%%%%%%%%%%%%%%%%%%%%%%%%%%%%%%%%%%%%%%%%%%

The discussion above leaves aside several additional questions about this regularized dynamics.

First, although we insisted on choosing Rein's regularization procedure because the ``pseudo-energy"
$$
\cW[f_\eps](t):=\iint_{\bR^3\times\bR^3}e(\xi)f_\eps(t,x,\xi)dxd\xi+\int_{\bR^3}\tfrac12(|\tilde E_\eps|^2+|\tilde B_\eps|^2)(t,x)dx
$$
is constant under the dynamics of (\ref{VMReg}) --- see Proposition \ref{P-ExistUniqVMeps} --- we have not used this quantity in the 
mean-field limit itself. (The same can be said of earlier work in the same direction, first and foremost \cite{BraunHepp} and \cite{Dobrushin}.)

Strictly speaking, Rein's theorem in \cite{ReinCMS} establishes that $\cW[f_\eps]$ is constant only in the case where the initial particle
distribution function $f^{in}$ is a probability density (in fact, in the more general case where $f^{in}\ge 0$ a.e. is a measurable function
such that $\cW[f^{in}]<+\infty$.) Therefore, even though the empirical measure $f_{N,\eps}$ in Theorem \ref{T-MFLim} is a weak solution 
of the regularized Vlasov-Maxwell system in the sense of measures, Rein's theorem cannot be applied directly to initial data of the form
$f^{in}_N$.

\begin{Prop}\lb{P-ConsEnergN}
Let $N\ge 1$ and $\eps>0$ be fixed, let $(x^{in}_i,\xi^{in}_i)_{1\le i\le N}$ be a $N$-tuple of elements in $\bR^3\times\bR^3$, and let 
$$
f^{in}_N:=\frac1N\sum_{i=1}^N\de_{x^{in}_i}\otimes\de_{\xi^{in}_i}\,.
$$
Let $t\mapsto(x_i(t),\xi_i(t))_{1\le i\le N}$ be the solution of (\ref{DynSystVMReg}) with initial data $(x^{in}_i,\xi^{in}_i)_{1\le i\le N}$, 
and define
$$
f_{N,\eps}(t,\cdot,\cdot):=\frac1N\sum_{i=1}^N\de_{x_i(t)}\otimes\de_{\xi_i(t)}\,,\quad t\ge 0\,.
$$
Then
$$
\cW[f_{N,\eps}](t)=\cW[f^{in}_{N}]
$$
for all $t\ge 0$.
\end{Prop}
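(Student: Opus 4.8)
The plan is to sidestep Rein's theorem---which is proved only for $f^{in}$ an honest density---and instead reproduce, directly at the level of the atomic empirical measure, the energy balance already carried out in the proof of Proposition \ref{P-ExistUniqVMeps}. First I would identify the force in the $\dot\xi_i$ equation of (\ref{DynSystVMReg}) with the mean-field Lorentz force: by the reconstruction formula (\ref{DefFeps})--(\ref{Def-rmeps}) the trajectories satisfy $\dot\xi_i(t)=E_\eps(t,x_i(t))+v(\xi_i(t))\wedge B_\eps(t,x_i(t))$, where $(E_\eps,B_\eps)$ is the field of (\ref{VMReg}) sourced by $f_{N,\eps}$ (self-interaction included, which is precisely why the full sum $\tfrac1N\sum_{j=1}^N$ appears). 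Alongside it I introduce $(\tilde E_\eps,\tilde B_\eps)$, the solution of the Maxwell system (\ref{Maxw}) driven by $\chi_\eps\star_x\rho_{f_{N,\eps}}$ and $\chi_\eps\star_xj_{f_{N,\eps}}$. Since $f_{N,\eps}$ is atomic these sources are smooth and compactly supported, so all four fields are smooth; the only point needing care is finiteness of $\tfrac12\int(|\tilde E_\eps|^2+|\tilde B_\eps|^2)$, which holds because the mollifier $\chi_\eps$ turns the Coulomb tail of the point-charge initial field into a bounded function decaying like $|x|^{-2}$, hence square-integrable. As in Proposition \ref{P-ExistUniqVMeps}, linearity and uniqueness give $E_\eps=\chi_\eps\star_x\tilde E_\eps$ and $B_\eps=\chi_\eps\star_x\tilde B_\eps$.

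Next I would differentiate the two pieces of $\cW[f_{N,\eps}]$. Because $v=\grad_\xi e$ and $v(\xi_i)\cdot(v(\xi_i)\wedge B_\eps)=0$, the kinetic part obeys
$$
\frac{d}{dt}\frac1N\sum_{i=1}^Ne(\xi_i(t))=\frac1N\sum_{i=1}^Nv(\xi_i(t))\cdot E_\eps(t,x_i(t)).
$$
Writing $E_\eps=\chi_\eps\star_x\tilde E_\eps$ and using that $\chi_\eps$ is even, the right-hand side equals $\int_{\bR^3}\tilde E_\eps\cdot(\chi_\eps\star_xj_{f_{N,\eps}})dx$, now an integral of smooth functions, exactly as in the manipulation following (\ref{Energ1}). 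For the field part I would repeat the computation (\ref{Energ2}) verbatim: it produces $-\int_{\bR^3}\tilde E_\eps\cdot(\chi_\eps\star_xj_{f_{N,\eps}})dx$ together with $\int_{\bR^3}(\tilde E_\eps\cdot\Rot_x\tilde B_\eps-\tilde B_\eps\cdot\Rot_x\tilde E_\eps)dx=-\int_{\bR^3}\Div_x(\tilde E_\eps\wedge\tilde B_\eps)dx$. Adding the two balances makes the current terms cancel and yields $\tfrac{d}{dt}\cW[f_{N,\eps}]=0$, whence the claim upon integrating in $t$.

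The main obstacle is justifying the vanishing of this last boundary term, since $\tilde E_\eps$ does not decay fast enough---it carries unit net charge and behaves like $|x|^{-2}$ at infinity. I would resolve it by showing that $\tilde B_\eps(t,\cdot)$ is compactly supported for each $t$: taking the curl of the Amp\`ere law and using $\Div_x\tilde B_\eps=0$ gives $\Box_{t,x}\tilde B_\eps=\Rot_x(\chi_\eps\star_xj_{f_{N,\eps}})$, whose source is supported in a fixed bounded set for $t\in[0,T]$ (the particles stay bounded); moreover both Cauchy data vanish, the condition $\d_t\tilde B_\eps\rstr_{t=0}=-\Rot_x\tilde E_\eps\rstr_{t=0}=0$ holding because $E^{in}$ is a gradient. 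Finite speed of propagation then confines $\tilde B_\eps(t,\cdot)$, hence $\tilde E_\eps\wedge\tilde B_\eps$, to a bounded set, and the divergence integral vanishes. A secondary point is that each $t\mapsto(x_i(t),\xi_i(t))$ is $C^1$---immediate since $Y_\eps$ is smooth---so that differentiating $\sum_ie(\xi_i(t))$ and differentiating under the field-energy integral are both legitimate.
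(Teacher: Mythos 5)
Your proof is correct, but it takes a genuinely different route from the paper's. The paper never redoes the energy computation for atomic data: it mollifies the empirical measure in phase space, setting $f^{in}_{N,\eta}=f^{in}_N\star_{x,\xi}\zeta_\eta$, applies Rein's theorem (Proposition \ref{P-ExistUniqVMeps}) to these smooth compactly supported densities to get $\cW[f_{N,\eta,\eps}](t)=\cW[f^{in}_{N,\eta}]$, and then passes to the limit $\eta\to 0$: Dobrushin's estimate gives $f_{N,\eta,\eps}(t,\cdot,\cdot)\to f_{N,\eps}(t,\cdot,\cdot)$ weakly, uniformly on $[0,T]$, together with uniform convergence of the fields, while uniform support and decay bounds (compact support of $\tilde B_{N,\eta,\eps}$ and of $\tilde E_{N,\eta,\eps}-\d_tY\star_x\tilde E^{in}_{N,\eta,\eps}$, plus $L^2$ convergence of the initial fields) allow passing to the limit in each term of $\cW$. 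You instead re-run Rein's computation directly on the empirical measure, which works precisely because the mollification makes all sources and fields smooth even when $f_{N,\eps}$ is atomic; the two delicate points --- finiteness of the field energy and the vanishing of $\int_{\bR^3}\Div_x(\tilde E_\eps\wedge\tilde B_\eps)\,dx$ --- you handle by square-integrability of the mollified Coulomb field and by compact support of $\tilde B_\eps(t,\cdot)$ via finite speed of propagation. Your wave-equation argument for $\tilde B_\eps$, with both Cauchy data vanishing because $E^{in}$ is curl-free, is exactly right; note it also yields that $\d_t\tilde E_\eps=\Rot_x\tilde B_\eps-\chi_\eps\star_xj_{f_{N,\eps}}$ is compactly supported, hence so is $\tilde E_\eps(t,\cdot)-\tilde E_\eps(0,\cdot)$, which is what actually legitimizes differentiating under the field-energy integral --- a point you gloss slightly. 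Your approach buys self-containedness: no double limit, no appeal to the Dobrushin stability machinery, and it makes transparent why self-interaction must be kept in (\ref{DynSystVMReg}) (the force on particle $i$ is exactly the mean-field force of the empirical measure). The paper's approach buys economy: it reuses Rein's theorem and the already-established stability estimate as black boxes, at the cost of the approximation bookkeeping.
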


\begin{proof}
Regularize $f^{in}_N$ in $(x,\xi)$, for instance by replacing $f^{in}_N$ with its convolution $f^{in}_N\star_{x,\xi}\zeta_\eta$ with any (nonnegative, 
compactly supported) mollifying sequence $(\zeta_\eta)_{\eta>0}$ on $\bR^3\times\bR^3$. If $|x^{in}_i|,|\xi^{in}_i|<R$ for each $i=1,\ldots,N$ and 
$\Supp(\zeta_\eta)\subset B(0,\eta)\times B(0,\eta)$, one has 
$$
\Supp(f^{in}_{N,\eta})\subset B(0,R+\eta)\times B(0,R+\eta)\,.
$$
Then
$$
\tilde E^{in}_{N,\eps}=-\grad_x G\star_x\chi_\eps\star_x\rho_{f^{in}_{N}}
\hbox{ and }
\tilde E^{in}_{N,\eta,\eps}=-\grad_x G\star_x\chi_\eps\star_x\rho_{f^{in}_{N,\eta}}
$$
both belong to $L^2(\bR^3)$ since $\grad_xG(x)=O(|x|^{-2})$ as $|x|\to+\infty$. 

Denote by $(f_{N,\eta,\eps},E_{N,\eta,\eps},B_{N,\eta,\eps})$ the solution of (\ref{VMReg}) with initial data 
$(f^{in}_{N,\eta},\chi_\eps\star_x\tilde E^{in}_{N,\eta,\eps},0)$. Applying Rein's theorem (Proposition \ref{P-ExistUniqVMeps}) shows that
$$
\cW[f_{N,\eta,\eps}](t)=\cW[f^{in}_{N,\eta}]<+\infty\,,\quad t\ge 0\,.
$$

Let $\eta\to 0$. Then $f^{in}_{N,\eta}\to f^{in}_N$ in the weak topology of Borel probability measures on $\bR^3$, and 
$$
\iint_{\bR^3\times\bR^3}e(\xi)f^{in}_{N,\eta}(x,\xi)dxd\xi\to\iint_{\bR^3\times\bR^3}e(\xi)f^{in}_{N}(dxd\xi)
$$
since $\Supp(f^{in}_{N,\eta})\subset B(0,R+\eta)\times B(0,R+\eta)$. On the other hand, denoting
$$
\th_\eta(x):=\int_{\bR^3}\zeta_\eta(x,\xi)d\xi\,,
$$
which is a regularizing sequence on $\bR^3$, we see that
$$
\tilde E^{in}_{N,\eta,\eps}=\th_\eta\star_x\tilde E^{in}_{N,\eps}\to\tilde E^{in}_{N,\eps}
$$
in $L^2(\bR^3)$. Therefore
$$
\cW[f^{in}_{N,\eta}]\to\cW[f^{in}_{N}]<+\infty\quad\hbox{ as }\eta\to 0\,.
$$

Reasoning as in the proof of Theorem \ref{T-MFLim} and Corollary \ref{T-MFLim}, since $f_{N,\eta,\eps}$ and $f_{N,\eps}$ are two solutions 
of the same regularized Vlasov-Maxwell system with initial data satisfying $f^{in}_{N,\eta}\to f^{in}_N$ in weak-$\cP(\bR^3\times\bR^3)$ as 
$\eta\to 0$, we deduce from Dobrushin's estimate that
$$
f_{N,\eta,\eps}(t,\cdot,\cdot)\to f_{N,\eps}(t,\cdot,\cdot)\hbox{ in weak-}\cP(\bR^3\times\bR^3)
$$
uniformly in $t\in [0,T]$ for each $T>0$, while
$$
(\tilde E_{N,\eta,\eps},\tilde B_{N,\eta,\eps})\to(\tilde E_{N,\eps},\tilde B_{N,\eps})
$$
uniformly on $[0,T]\times\bR^3$ for each $T>0$ as $\eta\to 0$. (Recall that $(\tilde E_{N,\eta,\eps},\tilde B_{N,\eta,\eps})$ is the solution
of the regularized Maxwell system (\ref{Maxw}) with initial data (\ref{CondinMaxw}) with $f_{N,\eta,\eps}$ in the place of $f_\eps$, while
$(\tilde E_{N,\eps},\tilde B_{N,\eps})$ is the solution of the same system with $f_{N,\eps}$ in the place of $f_\eps$.)

Obviously $\Supp(f_{N,\eta,\eps}(t,\cdot,\cdot))\subset B(0,R+\eta+t)\times\bR^3$ (since particles travel at speed $<1$). Since
$$
\int_{\bR^3}\rho_{f_{N,\eta,\eps}}(t,x)dx=1\hbox{ and }\int_{\bR^3}|j_{f_{N,\eta,\eps}}(t,x)|dx\le1
$$
we conclude from the formulas expressing $E_{N,\eta,\eps}$ and $B_{N,\eta,\eps}$, i.e.
$$
\ba
E_{N,\eta,\eps}&=-\d_t\grad_xY_\eps\star_xG\star_x\rho_{f^{in}_{N,\eta}}
	-\grad_xY_\eps\star_{t,x}\rho_{f_{N,\eta,\eps}}-\d_tY_\eps\star_{t,x}j_{f_{N,\eta,\eps}}
\\
B_{N,\eta,\eps}&=\Rot_x(Y_\eps\star_{t,x}j_{f_{N,\eta,\eps}})
\ea
$$
that there exists $C_{\eps,T}>0$ such that
$$
|E_{N,\eta,\eps}(t,x)|+|B_{N,\eta,\eps}(t,x)|\le C_{\eps,T}
$$
uniformly in $\eta>0$ as $(t,x)\in[0,T]\times\bR^3$. Therefore, for each $t\in[0,T]$, one has
$$
\Supp(f_{N,\eta,\eps}(t,\cdot,\cdot))\subset B(0,R+\eta+T)\times B(0,R+\eta+2TC_{\eps,T})
$$
uniformly in $\eta>0$, and 
$$
\Supp(\rho_{f_{N,\eta,\eps}}(t,\cdot))\,,\,\,\Supp(j_{f_{N,\eta,\eps}}(t,\cdot))\subset B(0,R+\eta+t)\,.
$$
We also deduce from the formulas expressing $\tilde E_{N,\eta,\eps}$ and $\tilde B_{N,\eta,\eps}$ that, if $0\le t\le T$, then
$$
\tilde E_{N,\eta,\eps}-\d_tY\star_x\tilde E^{in}_{N,\eta,\eps}\hbox{ and }\tilde B_{N,\eta,\eps}
$$
have compact support in $[0,T]\times B(0,R+\eta+\eps+2T)$. 

Since we already know that $E^{in}_{N,\eta,\eps}\to E^{in}_{N,\eps}$ in
$L^2(\bR^3)$ and that $(\tilde E_{N,\eta,\eps},\tilde B_{N,\eta,\eps})\to(\tilde E_{N,\eps},\tilde B_{N,\eps})$ uniformly on $[0,T]\times\bR^3$ 
for each $T>0$ as $\eta\to 0$, we conclude that
$$
\cW[f_{N,\eta,\eps}](t)\to\cW[f_{N,\eps}](t)
$$
for each $t\ge 0$ as $\eta\to 0$, so that
$$
\cW[f_{N,\eps}](t)=\cW[f^{in}_{N,\eps}]
$$
for all $t\ge 0$.
\end{proof}

\smallskip
Since the quantity $\cW[f_{N,\eps}](t)$ is an invariant of the motion for the dynamics of (\ref{DynSystVMReg}), a natural question would be
to check whether (\ref{DynSystVMReg}) is a hamiltonian system. We have left this question unanswered for lack of applications of immediate
interest. Besides, since the mollified dynamics (\ref{DynSystVMReg}) is not a fundamental law of physics, whether it is hamiltonian is perhaps 
a purely academic question.

\smallskip
Several remarks are in order to conclude the analysis presented above.

\smallskip
First, we have chosen to study (\ref{DynSystVMReg}) instead of (\ref{DynSystVMReg2}) because empirical measures constructed on the 
trajectories of  (\ref{DynSystVMReg}) are \textit{exact} solutions of (\ref{VMReg}) in the measure sense. This interpretation disappears if 
one neglects the self-interaction term in the regularized system. However, because of Proposition \ref{P-EquivDynSyst12}, both regularized 
systems are equivalent for $\eps>0$ fixed and $N\to+\infty$. On the contrary, none of these systems seem to make very much sense for
$\eps=0$, and in any case modeling self-interaction in classical electrodynamics is a source of conceptual difficulties, as mentioned above.
As explained on p. 677 in \cite{ElsKiesRic09}, there is little hope of deriving the Vlasov-Maxwell system as a mean-field limit of a particle
system without regularization as long as self-interaction is not better understood.

Also, the work presented here leaves aside several issues mentioned in \cite{ElsKiesRic09} without further analysis, such as including
spin in the particle system; as in \cite{ElsKiesRic09}, the regularization procedure used here (based on convolution with a blob function in 
the space variable only)  is not Lorentz invariant. In any case, this regularization should not be viewed as an attempt to depart from a strict
point particle model and introduce some physical effect involving particle size, but as a mere mathematical expedient.

As a matter of fact, the regularized dynamics (\ref{DynSystVMReg}) could be regarded as an analogue for the Vlasov-Maxwell system of 
the well-known vortex blob method used in numerical simulations of incompressible fluid flows: see section 5.3 in \cite{MarchioPulvi} for 
a concise, yet lucid presentation of this method. In the present context, one has the following approximation result.

\begin{Prop}
Let $f^{in}\in L^\infty(\bR^3\times\bR^3)$ be a compactly supported probability density on $\bR^3\times\bR^3$. 

For each $N\ge 1$, let $(x^{in}_{i,N},\xi^{in}_{i,N})_{1\le i\le N}\in(\bR^3\times\bR^3)^N$ be such that
$$
\frac1N\sum_{i=1}^N\de_{x^{in}_{i,N}}\otimes\de_{\xi^{in}_{i,N}}\to f^{in}dxd\xi\hbox{ weakly in }\cP(\bR^3\times\bR^3)
$$
in the limit as $N\to+\infty$. 

For each $\eps>0$, let $t\mapsto(x(t)_{i,N,\eps},\xi(t)_{i,N,\eps})_{1\le i\le N}$ be the solution of the regularized dynamics (\ref{DynSystVMReg}). 
Then, there exists subsequences $N_k\to+\infty$ and $\eps_k\to 0$ such that
$$
\frac1N\sum_{i=1}^N\de_{x_{i,N_k,\eps_k}(t)}\otimes\de_{\xi_{i,N_k,\eps_k}(t)}\to f(t,\cdot,\cdot)dxd\xi\hbox{ weakly in }\cP(\bR^3\times\bR^3)
$$
uniformly in $t\in[0,T]$ for each $T>0$, where $(f,E,B)$ is a global weak solution of the Vlasov-Maxwell system (\ref{VM}) with initial data
(\ref{CondIn})-(\ref{CompatIn}).
\end{Prop}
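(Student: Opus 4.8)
The plan is to combine two limits that cannot be taken at a uniform rate: the mean-field limit $N\to+\infty$ at fixed $\eps$, controlled quantitatively by Theorem \ref{T-MFLim}, and the vanishing-regularization limit $\eps\to 0$, which removes the mollification and produces a weak solution of the unregularized system (\ref{VM}). The obstruction to a simultaneous limit is that the constant $C(T,\eps)=(1+TL(T,\eps))e^{T^2L(T,\eps)}$ of Theorem \ref{T-MFLim} blows up as $\eps\to 0$, since $L(T,\eps)=\Lip_T(k_\eps)$ is governed by the derivatives of $Y_\eps$, which are unbounded as $\eps\to 0$. The remedy is a diagonal extraction: first select a subsequence $\eps_k\to 0$ along which the regularized solutions $f_{\eps_k}$ converge to a weak solution $f$ of (\ref{VM}); then, $\eps_k$ being fixed so that $C(T,\eps_k)<+\infty$, choose $N_k$ so large that the $N_k$-particle empirical measure lies within $1/k$ of $f_{\eps_k}$ in the Monge-Kantorovich-Rubinstein metric, uniformly on $[0,T]$.

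First I would collect the a priori bounds that are uniform in $\eps$, all furnished by Proposition \ref{P-ExistUniqVMeps}: conservation of mass ($\iint f_\eps\,dxd\xi=1$), conservation of every $L^p$ norm ($\|f_\eps(t,\cdot,\cdot)\|_{L^p}=\|f^{in}\|_{L^p}$), and conservation of the pseudo-energy $\cW[f_\eps](t)=\cW[f^{in}_\eps]$. Because $E^{in}=-\grad_xG\star_x\rho_{f^{in}}\in L^2(\bR^3)$ for a compactly supported $f^{in}\in L^\infty$, one has $\chi_\eps\star_xE^{in}\to E^{in}$ in $L^2$, so $\cW[f^{in}_\eps]$ is bounded uniformly in $\eps$; this bounds the kinetic energy $\iint e(\xi)f_\eps\,dxd\xi$ and the field energy $\tfrac12\int(|\tilde E_\eps|^2+|\tilde B_\eps|^2)\,dx$ uniformly. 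Since particles travel at speed $<1$, the $x$-support of $f_\eps(t,\cdot,\cdot)$ stays inside a fixed ball $B(0,R+t)$, while the kinetic-energy bound gives uniform integrability in $\xi$; together these yield tightness of $\{f_\eps(t,\cdot,\cdot)\}$, uniform in $\eps$ and in $t\in[0,T]$. Interpolating the $L^\infty\cap L^1$ bound against the kinetic-energy bound controls $\rho_{f_\eps}$ and $j_{f_\eps}$ in $L^\infty([0,T];L^{4/3}(\bR^3))$. One then extracts $\eps_k\to 0$ with $f_{\eps_k}\rightharpoonup f$ weak-$*$ in $L^\infty(\bR_+;L^p)$ and with the fields converging, and identifies $(f,E,B)$ as a global weak solution of (\ref{VM}) with the initial data (\ref{CondIn})-(\ref{CompatIn}).

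The main obstacle is to pass to the limit in the nonlinear Lorentz term $(E_\eps+v(\xi)\wedge B_\eps)\cdot\grad_\xi f_\eps$, a product of two quantities known only to converge weakly. This is resolved exactly as in the theory of DiPerna and Lions for global weak solutions of the Vlasov-Maxwell system: a velocity-averaging lemma applied to the transport equation satisfied by $f_\eps$ upgrades the weak convergence of $\rho_{f_\eps}$ and $j_{f_\eps}$ to strong convergence in $L^p_{\mathrm{loc}}$; since the Maxwell equations in (\ref{VMReg}) are linear in these sources and the double mollifier $\chi_\eps\star_x\chi_\eps$ is an approximate identity, the fields $E_{\eps_k},B_{\eps_k}$ converge strongly (locally) and the Maxwell constraints pass to the limit, so that the product may be passed to the limit against the weakly convergent $\grad_\xi f_{\eps_k}$. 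The one technical point requiring attention is uniformity in time: from the transport equation and the uniform flux bounds one reads off equicontinuity of $t\mapsto\langle f_\eps(t,\cdot,\cdot),\varphi\rangle$ for each fixed bounded Lipschitz $\varphi$, and combined with the uniform tightness above and the Arzel\`a-Ascoli theorem this gives $f_{\eps_k}(t,\cdot,\cdot)\to f(t,\cdot,\cdot)$ weakly, uniformly in $t\in[0,T]$, that is, $\Dist_{MKR}(f_{\eps_k}(t,\cdot,\cdot),f(t,\cdot,\cdot))\to 0$ uniformly on $[0,T]$.

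It then remains only to close the diagonal argument. Having extracted $\eps_k\to 0$ by a preliminary diagonal extraction along an exhaustion $T_m\uparrow+\infty$ (the $\eps$-uniform bounds being valid on every finite interval), fix the horizons $T_k:=k$. For each $k$ one has $C(T_k,\eps_k)<+\infty$, and since $f^{in}_N\to f^{in}dxd\xi$ weakly we have $\Dist_{MKR}(f^{in},f^{in}_N)\to 0$, so Theorem \ref{T-MFLim} b) lets us choose $N_k$ with
$$
\sup_{t\in[0,T_k]}\Dist_{MKR}\big(f_{\eps_k}(t,\cdot,\cdot),f_{N_k,\eps_k}(t,\cdot,\cdot)\big)\le\tfrac1k\,.
$$
Then for any fixed $T>0$ and all $k$ with $T_k\ge T$, the triangle inequality for $\Dist_{MKR}$ gives, for every $t\in[0,T]$,
$$
\Dist_{MKR}\big(f(t,\cdot,\cdot),f_{N_k,\eps_k}(t,\cdot,\cdot)\big)\le\Dist_{MKR}\big(f(t,\cdot,\cdot),f_{\eps_k}(t,\cdot,\cdot)\big)+\tfrac1k\,,
$$
and both terms tend to $0$ uniformly in $t\in[0,T]$ as $k\to+\infty$, which is precisely the asserted convergence along the single subsequence $(N_k,\eps_k)$, valid on every $[0,T]$.
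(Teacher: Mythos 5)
Your proposal is correct and follows essentially the same route as the paper: a diagonal extraction that combines the compactness of the regularized solutions as $\eps\to 0$ (yielding a DiPerna--Lions weak solution of (\ref{VM})) with the quantitative mean-field estimate of Theorem \ref{T-MFLim} at each fixed $\eps_k$, concluded by the triangle inequality for $\Dist_{MKR}$. The only difference is one of presentation: where the paper simply cites Rein's Proposition 4 in \cite{ReinCMS} (together with \cite{DiPernaLionsVM}) for the uniform-in-time weak convergence $f_{\eps_k}\to f$, you sketch a proof of that ingredient (uniform mass, $L^p$ and pseudo-energy bounds, velocity averaging, Arzel\`a--Ascoli), which is precisely the content of the cited result.
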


The global existence of weak solutions of the Vlasov-Maxwell system was obtained by DiPerna-Lions \cite{DiPernaLionsVM}, with some
additional precisions to be found in \cite{ReinCMS}. The (very weak) approximation result above follows from applying Rein's Proposition 4 in 
\cite{ReinCMS} and Theorem \ref{T-MFLim}. Indeed, let $f_\eps$ be the solution of the regularized Vlasov-Maxwell system (\ref{VMScalReg})
with initial data (\ref{CondInVMScalReg}) for each $\eps>0$. 

By Proposition 4 in \cite{ReinCMS}, there exists a subsequence $\eps_k\to 0$ such that $f_{\eps_k}(t,\cdot,\cdot)dxd\xi\to f(t,\cdot,\cdot)dxd\xi$ 
in weak-$\cP(\bR^3\times\bR^3)$ uniformly in $t\in[0,T]$ for all $T>0$ as $\eps_k\to 0$. Therefore, for each $k>0$ and $T>0$, there exists 
$\eps_k>0$ small enough so that 
$$
\Dist_{MKR}(f(t,\cdot,\cdot),f_{\eps_k}(t,\cdot,\cdot))\le 2^{-k-1}\hbox{ for each }t\in[0,T]\,.
$$
Then, for this value of $\eps_k>0$, by Theorem \ref{T-MFLim}, there exists $N_k>0$ large enough so that 
$$
\Dist_{MKR}(f_{\eps_k}(t,\cdot,\cdot),f_{N_k,\eps_k}(t,\cdot,\cdot))\le 2^{-k-1}\hbox{ for each }t\in[0,T]\,,
$$
where
$$
f_{N_k,\eps_k}(t,\cdot,\cdot):=\frac1N\sum_{i=1}^N\de_{x_{i,N_k,\eps_k}(t)}\otimes\de_{\xi_{i,N_k,\eps_k}(t)}\,.
$$
One then concludes by the triangle inequality.

\smallskip
Finally, it could be interesting to study fluctuations as Braun and Hepp \cite{BraunHepp} did for the Vlasov-Poisson case; this is left for future 
investigation.

%%%%%%%%%%%%%%%%%%%%%%%%%%%%%%%%%%%%%%%%%%%%%%%%%%%%%%%%%%%%%%%%%%%%%%%%%%

\textbf{Acknowledgements.}
The author is indebted to V. Ricci for discussions about this problem and the one analyzed in \cite{ElsKiesRic09}. Part of this work was 
completed during a visit to Kyoto University and it is a pleasure to thank Prof. K. Aoki and the Department of Mechanical Engineering 
and Science for their kind hospitality and support.

%%%%%%%%%%%%%%%%%%%%%%%%%%%%%%%%%%%%%%%%%%%%%%%%%%%%%%%%%%%%%%%%%%%%%%%%%%

%%%%%%%%%%%%%%%%%%%%%%%%%%%%%%%%%%%%%%%%%%%%%%%%%%%%%%%%%%%%%%%%%%%%%%%%%%


\begin{thebibliography}{99}
%%%%%%%%%%%%%%%%%%%%%%%%%%%%%%%%%%%%%%%%%%%%%%%%%%%%%%%%%%%%%%%%%%%%%%%%%%

\bibitem{BGPArch}
F. Bouchut, F. Golse, C. Pallard:
Classical solutions and the Glassey-Strauss theorem for the 3D Vlasov-Maxwell system,
Archive for Rational Mech. and Anal. \textbf{170} (2003), 1--15.

\bibitem{BGPIbero}
F. Bouchut, F. Golse, C. Pallard:
Nonresonant smoothing for coupled wave+transport equations and the Vlasov-Maxwell system,
Rev. Mat. Iberoamericana \textbf{20} (2004), 865--892.

\bibitem{BraunHepp}
W. Braun, K. Hepp: 
The Vlasov dynamics and its fluctuations in the $1/n$ limit of interacting classical particles,
Comm. in Math. Phys. \textbf{56} (1977), 101--113.

\bibitem{CaloRein03}
S. Calogero, G. Rein:
On classical solutions of the Nordstr\"omÐVlasov system,
Comm. Partial Differential Equations \textbf{28} (2003), 1863--1885.

\bibitem{ChoquetBruhat71}
Y. Choquet-Bruhat:
Probl\`eme de Cauchy pour le syst\`eme int\'egro-diff\'erentiel d'Einstein-Liouville,
Ann. Inst. Fourier \textbf{21} (1971), 181--201.

\bibitem{DiPernaLionsVM}
R.J. DiPerna, P.-L. Lions:
Global weak solutions of Vlasov-Maxwell systems,
Comm. on Pure and Appl. Math. \textbf{42} (1989), 729--757.

\bibitem{Dobrushin}
R. L. Dobrushin:
Vlasov equations;
Func. Anal. Appl. \textbf{13} (1979), 115--123.

\bibitem{ElsKiesRic09}
Y. Elskens, M. K.-H. Kiessling, V. Ricci:
The Vlasov limit for a system of particles which interact with a wave field,
Comm. in Math. Phys. \textbf{285} (2009), 673--712.

\bibitem{ErdosYau}
L. Erd\"os, H.T. Yau:
Derivation of the nonlinear Schr\"odinger equation from a many body Coulomb system,
Adv. Theor. Math. Phys. \textbf{5} (2001), 1169--1205.

\bibitem{HaurayJabin07}
M. Hauray, P.-E. Jabin:
N-particle approximation of the Vlasov equations with singular potential, 
Arch. Rational Mech. Anal. \textbf{183} (2007), 489--524.

\bibitem{Jackson}
J.D. Jackson:
``Classical Electrodynamics''.
John Wiley and Sons Inc. 1999.

\bibitem{Kiessling08}
M. K.-H. Kiessling:
Microscopic derivatons of Vlasov equations,
Comm. Nonlin. Sci. Numer. Simul. \textbf{13} (2008), 106--113.

\bibitem{LuVla50}
A.A Luchina, A.A. Vlasov:
Chapter II, section 12, of \cite{Vlasov61}.

\bibitem{MarchioPulvi}
C. Marchioro, M. Pulvirenti:
``Mathematical Theory of Incompressible Nonviscous Fluids",
Springer-Verlag, New York 1994.

\bibitem{NeunWick}
H. Neunzert, J. Wick:
Die Approximation der L\"osung von Integro-Differentialgleichungen durch endliche Punktmengen;
Lecture Notes in Math. vol. 395, 275--290, Springer, Berlin (1974).

\bibitem{ReinCMS}
G. Rein:
Global weak solutions of the relativistic Vlasov-Maxwell system revisited,
Comm. in Math. Sci. \textbf{2} (2004), 145--158.

\bibitem{Spohn04}
H. Spohn:
``Dynamics of Charged Particles and their Radiation Fields''.
Cambridge Univ. Press, Cambridge 2004.

\bibitem{VillaniTOTold}
C. Villani:
``Topics in Optimal Transportation".
American Math. Soc. Providence RI 2003.
 
\bibitem{VillaniTOT}
C. Villani:
``Optimal Transport: Old and New''.
Springer-Verlag, Berlin-Heidelberg 2009.

\bibitem{Vlasov38}
A.A. Vlasov:
On Vibration Properties of Electron Gas (in Russian), 
J. Exp. Theor. Phys. \textbf{8} (3) (1938), 291.

\bibitem{Vlasov61}
A.A. Vlasov:
``Many-Particle Theory and Its Application to Plasma'', 
Gordon and Breach, New York 1961.




\end{thebibliography}
\end{document}